\documentclass[11pt,reqno]{amsart}

\usepackage{amsmath,amsthm,amscd,amssymb,amsfonts, amsbsy}
\usepackage{appendix}
\usepackage{latexsym}
\usepackage{txfonts}
\usepackage{exscale}
\usepackage{mathrsfs}
\usepackage{esint} 
\usepackage{enumitem} 
\usepackage{color}
\usepackage[perpage]{footmisc}
\usepackage[normalem]{ulem} 

\usepackage{relsize}

\usepackage{graphicx} 

\def\YYint#1#2#3{{\setbox0=\hbox{$#1{#2#3}{\iint}$}
    \vcenter{\hbox{$#2#3$}}\kern-.51\wd0}}
 


\usepackage[
  hmarginratio={1:1},     
  vmarginratio={1:1},     
  textwidth=400pt,			  
	textheight=580pt,        
  heightrounded,          
]{geometry}

\usepackage[utf8]{inputenc}

\usepackage[
	colorlinks,
	citecolor=blue,
	linktoc=all,
	pagebackref,
	hypertexnames=false
]{hyperref}

\usepackage{appendix}


\definecolor{br}{rgb}{1, 0.4,0}

\parskip=5pt



\allowdisplaybreaks


\numberwithin{equation}{section}

\theoremstyle{plain}
\newtheorem{theorem}[equation]{Theorem}
\newtheorem{prop}[equation]{Proposition}
\newtheorem{corollary}[equation]{Corollary}
\newtheorem{lemma}[equation]{Lemma}

\theoremstyle{definition}
\newtheorem{defn}[equation]{Definition}

\theoremstyle{remark}
\newtheorem{remark}[equation]{Remark}

\numberwithin{equation}{section}

\newcommand{\RR}{{\mathbb{R}}}
\newcommand{\NN}{{\mathbb{N}}}

\newcommand{\sH}{~d\mathcal{H}^{d-1}}

\DeclareMathOperator{\divg}{div}

\DeclareMathOperator{\dist}{dist}

\DeclareMathOperator{\Id}{Id}
\DeclareMathOperator{\I}{I}
\DeclareMathOperator{\II}{II}
\DeclareMathOperator{\III}{III}


\DeclareMathOperator{\graph}{graph}

\newcommand{\wGk}{\widetilde{\Gamma}_k}


\newcommand{\pD}{\partial D}


\reversemarginpar

\begin{document}

\allowdisplaybreaks

\title{Expansion of harmonic functions near the boundary \\ of Dini domains}

\author{Carlos Kenig}
\address{Carlos Kenig
\\ 
Department of Mathematics
\\
University of Chicago
\\
Chicago, IL 60637, USA}
\email{cek@math.uchicago.edu}

\author{Zihui Zhao}
\address{Zihui Zhao
\\ 
Department of Mathematics
\\
University of Chicago
\\
Chicago, IL 60637, USA}
\email{zhaozh@uchicago.edu}

\thanks{The first author was supported in part by NSF grant DMS-1800082, and the second author was partially supported by NSF grant DMS-1902756.}
\subjclass[2010]{35J25, 42B37, 31B35.}
\keywords{}

\begin{abstract}
	Let $u$ be a harmonic function in a $C^1$-Dini domain, such that $u$ vanishes on an open set of the boundary. We show that near every point in the open set, $u$ can be written uniquely as the sum of a non-trivial homogeneous harmonic polynomial and an error term of higher degree (depending on the Dini parameter). In particular, this implies that $u$ has a unique tangent function at every such point, and that the convergence rate to the tangent function can be estimated. We also study the relationship of tangent functions at nearby points in a special case.
\end{abstract}

\maketitle

\section{Introduction and main results}

A harmonic function can be decomposed into the summation of homogeneous harmonic polynomials of integer degrees. In particular, it can be written as a homogeneous harmonic polynomial plus a higher-order error term. In \cite{Han}, the author proved that a similar expansion holds for solutions to elliptic operators whose coefficients are Lipschitz. This is optimal: there are examples of elliptic operators with H\"older coefficients for which the solution does not have finite order of vanishing (see \cite{Plis} for an example of non-divergence form operator, and \cite{Miller} for an example of divergence form operator), so one cannot expect an expansion in homogeneous harmonic polynomials of finite degrees. On the other hand, if a solution of an elliptic operator with H\"older-continuous coefficient does have a finite order of vanishing at a point, Han's argument works and he gets a similar expansion near that point.

In a $C^1$-Dini domain, consider a non-trivial harmonic function $u$ which vanishes on an open set of the boundary $\pD \cap B_{5R}(0)$. Then $u$ has a finite order of vanishing in $B_R(0)$, which follows from a doubling property proven in \cite{AE} and later in \cite{KN} using a different method. Moreover, in a previous paper \cite{KZ}, we proved a more precise decay rate for such function (see Lemma \ref{lm:doublingL2u}); more importantly,
 we gave an estimate of the size of the singular set 
\[ \mathcal{S}(u) :=\{X \in \overline{D}\cap B_R(0): u(X) = 0 = |\nabla u(X)| \}. \] 
Combining the arguments in \cite{KZ} and \cite{Han}, we are able to show that $u$ has a similar expansion at the boundary of a Dini domain:

\begin{theorem}\label{thm:uexp}
	Let $D \subset \RR^d$ be a Dini domain with parameter $\theta$ (see Definition \ref{def:Dini}) and $\pD \ni 0$. Let $R_0, \Lambda>0$ be finite. Suppose that $u$ is a non-trivial harmonic function in $D\cap B_{5R_0}(0)$, $u=0$ on $\pD \cap B_{5R_0}(0)$, and the (modified) frequency function (defined as in \cite[Sections 4 and 3]{KZ}) at the origin satisfies $N_0(4R_0)\leq \Lambda$. 
	
	Then for any boundary point $X_0 \in \pD \cap B_{R_0}(0)$, there exists $R>0$ such that $u$ has a unique expansion
	\begin{equation}\label{eq:expansion}
		u(X) = P_N(X-X_0) + \tilde{\psi}(X-X_0)  \quad \text{ in } \quad B_{R}(X_0), 
	\end{equation} 
	where $P_N$ is a non-trivial homogeneous harmonic polynomial of degree $N \in \mathbb{N}$, and the error term $\tilde{\psi}$ satisfies
	\begin{equation}\label{eq:errortilde}
		|\tilde{\psi}(Y)| \leq C|Y|^N \tilde{\theta}(2|Y|), 
	\end{equation} 
	and
	\begin{equation}\label{eq:errortildegrad}
		|\nabla \tilde{\psi}(Y)| \leq C |Y|^{N-1} \mathring{\theta}(2|Y|), 
	\end{equation} 
	Here
	\begin{itemize}
		\item $N$ agrees with the vanishing order of $u$ at $X_0$, i.e. $N= N_{X_0} = \lim_{r \to 0} N_{X_0}(r)$, where $N_{X_0}(\cdot)$ is the (modified) frequency function of $u$ centered at $X_0$;
		\item the radius $R$ is determined by the frequency function at $X_0$ and the Dini parameter $\theta$ (see \eqref{eq:choiceR});
		\item $\tilde{\theta}$ is determined by the Dini parameter $\theta$ as in \eqref{def:errordecay} and it satisfies $\tilde{\theta}(r) \to 0$ as $r\to 0$;
		\item $\mathring{\theta}$ is determined by $\theta$ as in \eqref{def:thetaring} and \eqref{def:thetasharp}, and it satisfies $\mathring{\theta}(r) \to 0$ as $r\to 0$.
	\end{itemize} 
	%
\end{theorem}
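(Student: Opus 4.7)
The plan is to combine Han's polynomial-expansion strategy for elliptic operators with the boundary frequency-function machinery developed in \cite{KZ}. After translation we assume $X_0 = 0$. First, using the $C^1$-Dini graph representation of $\pD$, I would introduce a bi-Lipschitz flattening map $\Phi$ (with $\Phi - \Id$ of $C^1$-Dini regularity) straightening $\pD$ to $\{x_d = 0\}$. The pullback $v = u \circ \Phi^{-1}$ then solves a divergence-form equation $\divg(A\nabla v) = 0$ on $B_R^+$, with $v = 0$ on $\{x_d = 0\}$ and $A - \Id$ carrying a modulus of continuity controlled by $\theta$. Odd reflection of $v$ across $\{x_d = 0\}$ produces $\tilde v$ on a full ball, solving a divergence-form equation with Dini-continuous coefficients and with the origin as an interior zero.

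Next, the doubling estimate from Lemma \ref{lm:doublingL2u} identifies the vanishing order $N = \lim_{r \to 0} N_{X_0}(r) \in \NN$ of $\tilde v$ at $0$; using $N_0(4R_0) \le \Lambda$, one controls $N$ uniformly for $X_0 \in \pD \cap B_{R_0}(0)$, which also pins down the scale $R$ in \eqref{eq:choiceR}. For each $r \in (0, R]$, I would let $P_{N,r}$ be the $L^2(\partial B_r)$-best degree-$N$ homogeneous harmonic polynomial approximating $\tilde v$. The non-degeneracy lower bound $\|P_{N,r}\|_{L^2(\partial B_r)} \gtrsim r^{(d-1)/2 + N}$ follows from the doubling/frequency information. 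Energy inequalities for $\tilde v - P$ on dyadic annuli, combined with the Dini smallness of $A - \Id$, should yield an approximation rate of the form
\begin{equation*}
	\|\tilde v - P_{N,r}\|_{L^2(\partial B_r)} \lesssim r^{(d-1)/2+N}\, \omega(r),
\end{equation*}
where $\omega$ is an auxiliary Dini modulus built from $\theta$.

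Comparing $P_{N,r}$ and $P_{N,r/2}$ on $\partial B_{r/2}$ and using homogeneity would give $|P_{N,r}(Y) - P_{N,r/2}(Y)| \lesssim |Y|^N \omega(r)$, so a dyadic telescoping sum shows $\{P_{N,2^{-k}R}\}$ is Cauchy. Its limit is $P_N$, and tracking the tail of the sum produces the pointwise estimate \eqref{eq:errortilde} with $\tilde\theta$ identified as the quantity in \eqref{def:errordecay}. The gradient bound \eqref{eq:errortildegrad} then follows from interior estimates applied to $\tilde\psi$ on balls of radius $\sim |Y|/2$, the modulus $\mathring\theta$ of \eqref{def:thetaring}--\eqref{def:thetasharp} absorbing the Dini loss. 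Pulling back through $\Phi$ preserves the leading polynomial $P_N$ (since $\Phi - \Id$ is $C^1$-Dini) and absorbs the remainder into $\tilde\psi$ without worsening the decay. Uniqueness is immediate: the difference of any two such expansions would be a nonzero degree-$N$ homogeneous harmonic polynomial of size $o(|Y|^N)$.

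The hard part will be the iteration: propagating the Dini modulus $\theta$ so that the resulting moduli $\tilde\theta$ and $\mathring\theta$ still tend to $0$ as $r \to 0$. In Han's Lipschitz setting a geometric series suffices, but here I expect to have to construct the correct summable modulus from quantities of the form $\int_0^r \theta(s)/s\, ds$, and to replace the Schauder-type gradient control used in Han's proof by divergence-form $W^{1,2}$ estimates since after reflection the coefficients are only Dini-continuous. Additional care is needed near the flat boundary, where interior gradient estimates apply only on shrinking balls, which is precisely what forces the distinct (strictly weaker) modulus $\mathring\theta$ in \eqref{eq:errortildegrad}.
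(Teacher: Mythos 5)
Your overall architecture (orthogonal rotation plus flattening, odd reflection, doubling from Lemma \ref{lm:doublingL2u} to pin down the vanishing order, pull-back at the end, and the soft uniqueness argument) matches the paper. Where you diverge is the core of the argument: the paper follows Han by writing $-\Delta v=\divg\vec f$ with $\vec f=(\tilde A-\Id)\nabla v$, representing the inhomogeneity by a Newtonian potential $w$, Taylor-expanding $\nabla\Gamma(\cdot-Z)$ to degree $N$ to split $w=P_2+\psi_2$, and estimating three explicit integrals; you instead propose a Campanato-type iteration with best $L^2(\partial B_r)$ degree-$N$ harmonic approximants and a dyadic telescoping. That route is in principle viable and would, if the single-scale comparison error is $\theta(r)r^N$, reproduce the modulus $\int_0^r\theta(s)s^{-1}ds$ appearing in \eqref{def:errordecay}. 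However, as written there are two genuine gaps.

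First, the non-degeneracy claim $\|P_{N,r}\|_{L^2(\partial B_r)}\gtrsim r^{(d-1)/2+N}$ does \emph{not} follow from the doubling/frequency information. The frequency function only yields $\sup_{B_r}|v|\gtrsim r^{N+\alpha}$ for an arbitrarily small $\alpha>0$ (with $R$ depending on $\alpha$), see \eqref{eq:decayv}; this is perfectly compatible with the leading polynomial vanishing and $v$ behaving like $r^{N+\alpha/2}$. This is exactly why the paper needs the separate contradiction argument of Section \ref{sec:vexp}: assuming $P_1+P_2\equiv 0$, one re-inserts the resulting improved bound on $\bar v$ into the error integrals, obtains the integral inequality \eqref{eq:errorest3}, and runs a Gronwall-type absorption (via the regularized quotients $g_\epsilon$) to upgrade to $|v(Y)|\lesssim|Y|^{N+1}$, contradicting \eqref{as:vNordersharp}. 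The paper explicitly notes this step is easy when $\theta(s)\sim s^\alpha$ but delicate for general Dini moduli; your proposal skips it entirely, and without it the limit polynomial $P_N$ could be zero. Second, the gradient bound \eqref{eq:errortildegrad} cannot be obtained by interior estimates on balls of radius $\sim|Y|/2$ centered at $Y$: the reflected matrix $\tilde A$ is discontinuous across $\partial\RR^d_+$, so such balls cross an interface where no interior regularity theory applies, and even within one half-space an interior estimate on $B_{\dist(Y,\pR)/2}(Y)$ loses an unbounded factor $|Y|/\dist(Y,\pR)$ near the boundary. The paper's resolution is to observe that $\psi$ solves the inhomogeneous boundary-value problem \eqref{eq:psi} with Dini-continuous right-hand side $\vec g=(A-\Id)\nabla P_N$ and to invoke the up-to-the-boundary gradient estimate of \cite[Lemma 2.11]{DEK} separately in $\RR^d_+$ and $\RR^d_-$; some such boundary mechanism is indispensable and is missing from your sketch.
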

\begin{remark}
	We remark that when $D$ is a $C^{1,\alpha}$ domain with $\alpha \in (0,1)$ (that is, when $\theta(r) \approx r^\alpha$), the upper bounds of the error term satisfy that $\tilde{\theta}(r), \mathring{\theta}(r) \lesssim r^\alpha$.
\end{remark}

The significance of the above theorem is that we get a higher-order expansion of $u$ even though $u$ only has regularity up to $C^1$ at the boundary. Moreover, it is more difficult to estimate the gradient of the error term compared to \cite{Han}. This is not only because of difficulties at the boundary, but also due to regularity issues. Recall that (because of a different regularity and structure of the coefficient matrix) the solutions in the setting of \cite{Han} are in the Sobolev space $W^{2,p}$ for any $p>1$, i.e. they are strong solutions. So the $L^p$ estimates of $\nabla\tilde{\psi}$ as well as $\nabla^2 \tilde{\psi}$ follows directly from the estimate of $\tilde{\psi}$ in \eqref{eq:errortilde}, using interior $L^p$ estimates for strong solutions, see \cite[Theorem 9.11]{GT}. But more work is needed here to obtain the gradient estimate in \eqref{eq:errortildegrad}.

We also remind the readers that for an interior point $X_0 \in D$, we can simply use the decomposition of $u$ (into homogeneous harmonic polynomials of integer degrees) near $X_0$ to obtain the expansion
\[ u(X) - u(X_0) = P_N(X- X_0) + \tilde{\psi}(X-X_0) 
\]
for any $X\in D$ such that $|X-X_0| < \dist(X_0, \pD)$, where the error term $\tilde{\psi}$ satisfies $|\tilde{\psi}(Y)| \leq C|Y|^{N+1}$ as well as higher regularity estimates.

Recall that in \cite{KZ}, we have studied the blow up of the function $u$ at a boundary point as follows. For any $X_0 \in \pD \cap B_{R_0}(0)$ and $r>0$, let
\begin{equation}\label{def:Tru}
	T_{X_0, r} u(Z) := \frac{u(X_0 + rZ) }{\left(\frac{1}{r^d} \iint_{B_r(X_0) \cap D} u^2 dY \right)^{\frac12} }, \quad \text{ for any } Z \in \frac{D-X_0}{r}. 
\end{equation} 
Since $D$ is a $C^1$ domain, clearly $\frac{D-X_0}{r}$ converges locally graphically to a half space, above the hyperplane determined by $\nabla \varphi(X_0)$. Assuming without loss of generality that $\nabla \varphi(X_0) = 0$, then $\frac{D-X_0}{r}$ converges graphically to the upper half space $\RR^d_+$. 
Then for any sequence $r_j \to 0$, there exists a homogeneous harmonic polynomial $P$ in $\RR^d_+$ (possibly depending on the sequence $\{r_j\}$) of degree $N_{X_0}$, such that modulo passing to a subsequence
\[ T_{X_0, r_j} u \to P \text{ locally uniformly and strongly in } L^2, \text{ and weakly in } W^{1,2}, \]
and 
\[ \iint_{B_1^+(0)} |P(Z)|^2 dZ = 1, \]
where we denote $B_1^+(0) := B_1(0) \cap \RR^d_+$.
We say that $P$ is a tangent function of $u$ at the point $X_0$. A priori for different sequences $\{r_j\}$, we may get different tangent functions. However, using the expansion in \eqref{eq:expansion}, we can prove the following corollary:

\begin{corollary}\label{cor:uniqtang}
	For any $X_0 \in \pD \cap B_{R_0}(0)$, we have
	\begin{equation}\label{cl:cvrate}
		T_{X_0, r} u(Z) = cP_N(Z) + O(\tilde{\theta}(r)), 
	\end{equation} 
	where $P_N$ is the homogeneous harmonic polynomial as in \eqref{eq:expansion}, $c$ is a normalizing constant so that $P=cP_N$ has unit $L^2$ norm in $B_1^+(0)$, and $\tilde{\theta}$ is as in Theorem \ref{thm:uexp}. In particular, the polynomial $cP_N$ is the unique tangent function of $u$ at $X_0$, and the convergence rate to the tangent function is bounded by a constant multiple of $\tilde{\theta}(r)$.
\end{corollary}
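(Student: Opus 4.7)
\noindent\textbf{Proof plan for Corollary \ref{cor:uniqtang}.}

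The plan is to insert the expansion \eqref{eq:expansion} directly into the definition \eqref{def:Tru} of $T_{X_0,r}u$ and track the errors, separating the numerator from the normalization integral in the denominator.

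First, after a rotation so that $\nabla\varphi(X_0)=0$, for $r$ small enough (so that $B_r(X_0)\subset B_R(X_0)$ from Theorem \ref{thm:uexp}) and $|Z|\leq 1$, the homogeneity of $P_N$ gives
\[
u(X_0+rZ)=r^N P_N(Z)+\tilde\psi(rZ),\qquad |\tilde\psi(rZ)|\leq C r^N \tilde\theta(2r),
\]
where I used \eqref{eq:errortilde} together with $|Z|\leq 1$. Thus the numerator is $r^N\bigl(P_N(Z)+O(\tilde\theta(r))\bigr)$.

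For the denominator, the change of variables $Y=X_0+rZ$ converts it into
\[
\iint_{B_1(0)\cap\frac{D-X_0}{r}}\bigl(u(X_0+rZ)\bigr)^2 dZ
= r^{2N}\iint_{B_1(0)\cap\frac{D-X_0}{r}}\bigl(P_N(Z)+r^{-N}\tilde\psi(rZ)\bigr)^2 dZ.
\]
Expanding the square and applying the above bound for $\tilde\psi$ controls the cross and quadratic error terms by $C\tilde\theta(r)$ (since $P_N$ is a bounded polynomial on $B_1$). Next, because $D$ is $C^1$-Dini, the symmetric difference
\[
\left(B_1(0)\cap\tfrac{D-X_0}{r}\right)\triangle B_1^+(0)
\]
has Lebesgue measure controlled by a constant times $\theta(r)\leq C\tilde\theta(r)$; this lets me replace the domain of integration by $B_1^+(0)$ at the cost of the same order error. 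Hence the denominator equals $r^N\bigl(\|P_N\|_{L^2(B_1^+)}+O(\tilde\theta(r))\bigr)=r^N c^{-1}\bigl(1+O(\tilde\theta(r))\bigr)$ with $c=\|P_N\|_{L^2(B_1^+)}^{-1}$.

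Taking the ratio and expanding $\frac{1}{1+O(\tilde\theta(r))}=1+O(\tilde\theta(r))$ yields \eqref{cl:cvrate} with a uniform constant depending on $\|P_N\|_{L^\infty(B_1)}$ and $c$. Uniqueness of the tangent function is then immediate: any subsequential limit of $T_{X_0,r_j}u$ must equal $cP_N$, and \eqref{cl:cvrate} is precisely the quantitative rate of convergence. The only genuine subtlety is controlling the geometric error from $\tfrac{D-X_0}{r}\cap B_1$ versus the tangent half-space $B_1^+(0)$; this is what forces the invocation of the Dini hypothesis on $D$ and is the step where one must verify that the modulus $\theta$ is indeed dominated by $\tilde\theta$ (which follows from the definition \eqref{def:errordecay} recalled in Theorem \ref{thm:uexp}). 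Everything else is a direct algebraic consequence of the expansion.
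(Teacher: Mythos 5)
Your proposal is correct and follows essentially the same route as the paper: insert the expansion \eqref{eq:expansion} into \eqref{def:Tru}, use homogeneity of $P_N$ and the bound \eqref{eq:errortilde} for the numerator and the squared denominator, and control the domain discrepancy $\bigl(B_1(0)\cap\tfrac{D-X_0}{r}\bigr)\triangle B_1^+(0)$ by $\theta(r)\lesssim\tilde\theta(r)$ exactly as in the paper's \eqref{eq:setdiff}--\eqref{tmp:tf1}. No substantive differences.
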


We remark that the global estimate we obtained in \cite[Theorem 1.1]{KZ} does not imply the above result. In Corollary \ref{cor:uniqtang}, not only do we know that there is a unique tangent function at every point, we also know the convergence rate. The result in the current paper complements the main theorem in \cite{KZ} and uses the frequency function and purely PDE arguments.

By the monotonicity of the frequency function $N_{X_0}(\cdot)$ and the fact that its limit $N_{X_0} = \lim_{r\to 0} N_{X_0}(r)$ is integer-valued, we can show that
\[ X_0 \in \pD \cap B_{R_0}(0) \mapsto N_{X_0} \in \mathbb{N} \]
is upper semi-continuous. The proof uses a standard argument adapted to the modified frequency function we introduced in \cite{KZ}. Since this fact is tangential to the main topic of this paper, we defer the proof to the appendix. In general, the vanishing order could jump up, and we give a simple example in the footnote.\footnote{Consider the upper-half space $\RR^3_+ =\{(x_1, x_2, t): x_1, x_2 \in \RR, t>0\}$. The function $u: \RR^3_+ \to \RR$ defined as
\[ u(x_1, x_2, t) = (x_1+x_2)\cdot t \]
is harmonic. Let $L :=\{(x_1, x_2, 0): x_1 + x_2 = 0\}$ be a subset of $\partial \RR^3_+$. It is an easy exercise to show that for any $X_0 \in L$, the vanishing order $N_{X_0}$ is $ 2$; and for any $X_0 \in \partial \RR^3_+ \setminus L$, the vanishing order $N_{X_0}$ is $ 1$.}
But in the particular case where a sequence $X_j \in \pD \cap B_{R_0}(0)$ converges to $X_0$ is such that $N_{X_j} \to N_{X_0}$, since the vanishing order is integer-valued, we have $N_{X_j} \equiv N_{X_0}$ for $j$ sufficiently large. We can then show that the leading order polynomials in the expansion also converge:
\begin{prop}\label{prop:unif}
	Let $\{X_j\}, X_0$ be points in $\pD \cap B_{R_0}(0)$ satisfying $X_j \to X_0$. Suppose that $N_{X_j} = N_{X_0}$ for each $j$. Let $P_{X_j}, P_{X_0}$ denote the homogeneous harmonic polynomials in the expansions \eqref{eq:expansion} near $X_j, X_0$, respectively. Then $P_{X_j}$ converges to $P_{X_0}$ in the $C^{k}$-topology for any $k\in \mathbb{N}$.
\end{prop}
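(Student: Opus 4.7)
The plan is to exploit the uniqueness of the expansion in Theorem~\ref{thm:uexp} together with the continuity of $u$ under translations of the basepoint. Since the homogeneous harmonic polynomials of degree $N$ in $d$ variables form a finite-dimensional vector space on which all norms are equivalent, it suffices to establish convergence in any one norm, for instance $\|\cdot\|_{L^\infty(B_1)}$; the claimed $C^k$-convergence on any compact set then follows automatically.

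Set $\epsilon_j := X_j - X_0 \to 0$, and write $\tilde{\psi}_j, \tilde{\psi}_0$ for the two error terms appearing in the expansions at $X_j$ and $X_0$. The key identity arises from equating the two expansions at a common point $X = X_0 + Y = X_j + (Y-\epsilon_j) \in D$; after substituting $Z = Y - \epsilon_j$, it reads
\[
    P_{X_j}(Z) - P_{X_0}(Z+\epsilon_j) = \tilde{\psi}_0(Z+\epsilon_j) - \tilde{\psi}_j(Z).
\]
For $|Z|\leq r/2$ and $|\epsilon_j|\leq r/2$ with $r$ smaller than the common radius of validity of the two expansions, the bound \eqref{eq:errortilde} controls the right-hand side by $Cr^N\tilde{\theta}(2r)$. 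A first-order Taylor expansion of the fixed polynomial $P_{X_0}$, combined with the homogeneity bound $\|\nabla P_{X_0}\|_{L^\infty(B_r)}\lesssim r^{N-1}$, then yields
\[
    |P_{X_j}(Z) - P_{X_0}(Z)| \leq Cr^N\tilde{\theta}(2r) + C|\epsilon_j|\,r^{N-1}.
\]
Rescaling $Z = r\zeta/2$ and using that both $P_{X_j}$ and $P_{X_0}$ are homogeneous of degree $N$ converts this into
\[
    \|P_{X_j} - P_{X_0}\|_{L^\infty(B_1)} \leq C\tilde{\theta}(2r) + C|\epsilon_j|/r.
\]

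The estimate is then passed to the limit in two stages. First, for each fixed small $r>0$, let $j\to\infty$ so that $|\epsilon_j|/r\to 0$, obtaining $\limsup_{j\to\infty}\|P_{X_j}-P_{X_0}\|_{L^\infty(B_1)}\leq C\tilde{\theta}(2r)$. Since the left-hand side no longer depends on $r$, sending $r\to 0$ and invoking $\tilde{\theta}(r)\to 0$ from Theorem~\ref{thm:uexp} gives $P_{X_j}\to P_{X_0}$ in $L^\infty(B_1)$, and hence in $C^k$ on any compact set by the opening reduction.

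The main obstacle I anticipate is verifying that, for each fixed small $r>0$, the expansion of Theorem~\ref{thm:uexp} at $X_j$ is valid on the ball $B_r(X_j)$ for all $j$ sufficiently large; equivalently, that the radius supplied by \eqref{eq:choiceR} at $X_j$ does not shrink to zero as $j\to\infty$. The assumption $N_{X_j}\equiv N_{X_0}$ together with $X_j\to X_0$ should make the frequency-based parameters entering \eqref{eq:choiceR} uniform in $j$ for $j$ large, but this has to be read off carefully from \eqref{eq:choiceR} and the proof of Theorem~\ref{thm:uexp}. A secondary, minor issue is that the pointwise estimate above is initially obtained only on the set $\{Z:X_j+Z\in D\}$, which approximates a half-ball rather than all of $B_1$; this is harmless because polynomials of fixed degree are uniquely determined by their restriction to any open set, so $L^\infty$-control there still yields the desired $C^k$-convergence.
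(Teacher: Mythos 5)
Your route is genuinely different from the paper's: you treat Theorem \ref{thm:uexp} as a black box, compare the two expansions at the common point $X=X_j+Z=X_0+(Z+\epsilon_j)$, and use finite-dimensionality of the space of degree-$N$ homogeneous harmonic polynomials plus a two-stage limit ($j\to\infty$ first, then $r\to 0$). The algebra of the key identity and the Taylor step for $P_{X_0}$ are correct, and your remark that $L^\infty$-control on a (uniformly large) open subset of the half-ball suffices is fine. The paper instead re-enters the construction: it shows the flattened solutions $v_j$ converge to $v_0$ in $C^1$ (via continuity of $O_{x_j}$ and $\tilde\varphi_{x_j}$ in the basepoint) and then that the explicit objects $w_j$, $P_{j,1}$, $P_{j,2}$ from Section \ref{sec:vexp} converge.

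The gap in your argument is uniformity in $j$ of the data supplied by Theorem \ref{thm:uexp}, and it is twofold. You flag the radius issue (that $R_j$ from \eqref{eq:choiceR} might shrink), but you do not flag the more serious one: the constant $C$ in \eqref{eq:errortilde} is also point-dependent --- in the proof it is a multiple of the constant $C_N$ from \eqref{as:vNorder}, which in turn depends on an auxiliary radius $R'(X_0)$ at which the $N$-th order decay first becomes visible. Your final inequality is really $\|P_{X_j}-P_{X_0}\|_{L^\infty(B_1)}\leq C_j\,\tilde\theta(2r)+C|\epsilon_j|/r$ with $C_j$ a priori unbounded, and then the first stage of your limit yields $\limsup_j\|P_{X_j}-P_{X_0}\|\leq(\limsup_j C_j)\,\tilde\theta(2r)$, which could be infinite. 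The radius issue can plausibly be handled by the appendix-style continuity of $r\mapsto N(u,\cdot,r)$ in the basepoint together with $N_{X_j}\equiv N_{X_0}$, but making $C_j$ uniform requires tracking how the whole construction of Sections \ref{sec:ot}--\ref{sec:vexp} depends on the basepoint --- which is essentially the content of the paper's actual proof. As written, the proposal therefore does not close the argument; it reduces the proposition to a uniform version of Theorem \ref{thm:uexp} that is not stated or proved.
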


The paper is organized as follows. In Section \ref{sec:prelim} we introduce some notation, recall how we defined the modified frequency function in \cite{KZ} and use that to estimate the ratio of the $L^2$ norm of $u$ in two concentric balls of different radii. In Sections \ref{sec:ot} and \ref{sec:flat}, we reduce the problem from a harmonic function $u$ in a $C^1$-Dini domain to a solution $v$ in the upper half-space to a divergence-form elliptic operator, whose coefficient matrix is the identity matrix at the center point and is Dini-continuous everywhere. Then in Section \ref{sec:vexp}, we write down the expansion of $v$, estimate the error term using Dini-continuity of the coefficient matrix and show the leading-order homogeneous harmonic polynomial is non-trivial. Moreover in Section \ref{sec:graderror}, we estimate the gradient of the error term in $L^p$ and $L^\infty$. These are combined to give us the expansion of the original function $u$ (i.e. Theorem \ref{thm:uexp}) in Section \ref{sec:uexp}. The convergence rate to the (unique) tangent function is just a simple corollary of that expansion. Finally in Section \ref{sec:contang} we prove Proposition \ref{prop:unif}, namely the tangent functions are continuous at the boundary point where the vanishing orders do not jump up.

\section{Preliminaries}\label{sec:prelim}
\begin{defn}[Dini domains]\label{def:Dini}
	Let $\theta: [0,+\infty) \to [0, +\infty)$ be a nondecreasing function verifying
	\begin{equation}\label{cond:Dini}
		\int_0^* \frac{\theta(r)}{r} < \infty. \footnote{In particular, we can choose $R_0>0$ so that $\theta(8R_0)< \frac{1}{72}$ and $\int_0^{16R_0} \frac{\theta(s)}{s} ~ds \leq 1$.}
	\end{equation} 
	In particular, \eqref{cond:Dini} implies that $\theta(r) \to 0$ as $r \to 0$.
	A connected domain $D$ in $\RR^d$ is a \textit{$C^1$-Dini domain} with parameter $\theta$ if for each point $X_0$ on the boundary of $D$ there is a coordinate system $X=(x,x_d), x\in \RR^{d-1}, x_d\in \RR$ such that with respect to this coordinate system $X_0=(0,0)$, and there are a ball $B$ centered at $X_0$ and a continuously differentiable function $\varphi: \RR^{d-1} \to \RR$ verifying the following
	\begin{enumerate}
		\item $\|\nabla \varphi\|_{L^\infty(\RR^{d-1})} \leq C_0$ for some $C_0>0$;
		\item $|\nabla \varphi(x)- \nabla \varphi(y)| \leq \theta(|x-y|)$ for all $x,y \in \RR^{d-1}$;
		\item $D\cap B=\{(x,x_d)\in B: x_d > \varphi(x) \}$.
	\end{enumerate}
\end{defn}
\begin{remark}
	By shrinking the ball $B$ if necessary, we may modify the coordinate system so that $\nabla \varphi(0)=0$.	
\end{remark}


Under the assumptions of Theorem \ref{thm:uexp}, we have $u \in C^1(\overline{D \cap B_{4R_0}(0)})$ by the work of \cite{DEK}. Note that in \cite{DEK}, the Dini parameter is required to be doubling, in the sense that there exists a constant $C>1$ such that
\begin{equation}\label{Dini:doubling}
	\theta(2r) \leq C \theta(r) \quad \text{ for all } r, 
\end{equation}  
see \cite[(1.4)]{DEK}. This is not necessarily satisfied by all $\theta$ above verifying \eqref{cond:Dini}, in which case we just replace $\theta(r)$ by 
\[ \alpha(r):= \sup_{x,y \in \RR^{d-1} \atop{ |x-y|\leq r}} |\nabla \varphi(x) - \nabla \varphi(y)|. \]
We claim that $\alpha(\cdot)$ is doubling. In fact, assume that $\alpha(2r) = |\nabla \varphi(x) - \nabla \varphi(y)|$ for some $x, y \in \RR^{d-1}$ with $|x-y|\leq 2r$. Let $z$ be the middle point on the line segment $[x,y]$. Clearly $|x-z|, |z-y| \leq r$. Thus
\begin{align*}
	\alpha(2r) = |\nabla \varphi(x) - \nabla \varphi(y)| \leq |\nabla \varphi(x) - \nabla \varphi(z)| + |\nabla \varphi(z) - \nabla \varphi(y)| \leq 2\alpha(r).
\end{align*}
Besides $\alpha$ also verifies the Dini condition \eqref{cond:Dini}, since $\alpha(r) \leq \theta(r)$ by the property of $\nabla \varphi$. Therefore without loss of generality, we assume the above Dini parameter $\theta$ satisfies \eqref{Dini:doubling}.
Moreover, we remark that an example in \cite{JMS} seems to indicate that Dini regularity is the optimal condition to guarantee continuous differentiability of $u$.\footnote{In \cite{JMS}, the authors gives a divergence-form elliptic operator $L = -\divg(A(\cdot)\nabla)$, where coefficient matrix $A(\cdot)$ is continuous but its modulus of continuity fails the Dini condition \eqref{cond:Dini}, and a solution $u$ to $L$ which satisfies $u \in W^{1,p}_{loc}$ for every $p>1$ but $u \notin W^{1,\infty}_{loc}$.}

When $D$ is not a convex domain, the standard Almgren's frequency function for $u$ centered at a boundary point $X$, defined as
\begin{equation}\label{def:Nr}
	r\mapsto N(u, X, r ) := \frac{r \iint_{B_r(X)} |\nabla u|^2 ~dX }{\int_{\partial B_r(X)} u^2 \sH } 
\end{equation}  
may not be monotone. In the above definition, we assume we have extended $u$ by zero across the boundary, to simplify the notation. However in \cite{KZ}, for a Dini domain $D$ and for every boundary point $X_0 \in B_{R_0}(0) \cap \pD$ we were able to define a modified frequency function for $u$, denoted by $N_{X_0}(\cdot)$, using a special transformation $\Psi_{X_0}$, and prove that the map $r \mapsto N_{X_0}(r)$ is monotone. We will use the notation in \cite[Sections 3 and 4]{KZ}. More precisely, we recall the definition of the transformation 
	\begin{equation}\label{def:Psi}
		\Psi_{X_0}: X= (x, x_d) \in \RR^{d-1} \times \RR \mapsto X_0 + (x, x_d + 3|X| \hat{\theta}(|X|) \in \RR^d 
	\end{equation} 
	where
	\begin{equation}\label{def:that}
		\hat{\theta}(r) = \frac{1}{\log^2 2} \int_r^{2r} \frac1t \int_t^{2t} \frac{\theta(s)}{s} ds ~dt. 
	\end{equation} 
	is a smoothed version of the Dini parameter $\theta$, and it satisfies $\theta(r) \leq \hat{\theta}(r)\leq \theta(4r) $.
	
	The following doubling property essentially follows from \cite[Corollary 3.28]{KZ} and the monotonicity of the (modified) frequency function for $u\circ \Psi_{X_0}$. (Recall that $u$ is extended by zero outside of $D$.)
	 
\begin{lemma}[$L^2$-doubling property]\label{lm:doublingL2u}
	Let $X_0 = (x_0, \varphi(x_0)) \in B_{R_0}(0) $ be a boundary point of $D$. Then for any pair of radii $0< s<r$ sufficiently small, we have
	\begin{equation}\label{eq:doublingL2u}
		\left( \frac{s}{r} \right)^{d+ 2 N_{X_0}(2r) } \lesssim \frac{\iint_{B_s(X_0)} u^2 dX }{\iint_{B_r(X_0)} u^2 dX } \lesssim \left( \frac{s}{r} \right)^{d+2N_{X_0} \exp \left(-C\int_0^{4r} \frac{\theta(s)}{s} ds \right)},
	\end{equation}
	where $N_{X_0}(\cdot ) = \widetilde{N}(X_0, \cdot)$ is the monotone frequency function centered at $X_0$, as is defined in \cite[Sections 3 and 4]{KZ}, and $N_{X_0} = \lim_{r\to 0} N_{X_0}(r) \in \mathbb{N}$.
\end{lemma}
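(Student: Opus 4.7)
The strategy is to work with the pulled-back function $\tilde u := u\circ \Psi_{X_0}$, since by \cite{KZ} its modified frequency $r\mapsto \widetilde N(X_0,r)$ is nondecreasing with $\lim_{r\to 0}\widetilde N(X_0,r) = N_{X_0}$, and the map $\Psi_{X_0}$ locally straightens the Dini graph at $X_0$ up to an error of size $\hat\theta$. Thus $\tilde u$ is effectively a harmonic-type function on a half ball vanishing on the flat piece of the boundary, and the standard Almgren machinery applies.

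First I would invoke \cite[Corollary 3.28]{KZ}, which provides a quantitative logarithmic derivative of the shape
\[
\frac{d}{dr}\log \widetilde H(r) \;=\; \frac{d-1}{r} + \frac{2\widetilde N(X_0,r)}{r}\bigl(1+O(\theta(r))\bigr),
\]
where $\widetilde H(r) := \int_{\partial B^+_r(0)} \tilde u^2\,d\mathcal H^{d-1}$. Integrating from $s$ to $r$ and using monotonicity in the two opposite directions (from above by $\widetilde N(X_0, 2r)$, from below by the limit $N_{X_0}$) yields, after rearranging the multiplicative perturbation into the exponent,
\[
\Bigl(\tfrac{s}{r}\Bigr)^{d-1+2\widetilde N(X_0,2r)}\;\lesssim\;\frac{\widetilde H(s)}{\widetilde H(r)}\;\lesssim\;\Bigl(\tfrac{s}{r}\Bigr)^{d-1+2N_{X_0}\exp(-C\int_0^{4r}\theta(t)/t\,dt)}.
\]
Passing from sphere doubling to ball doubling is then the identity $\iint_{B^+_r} \tilde u^2 = \int_0^r \widetilde H(t)\,dt$, which raises each exponent by one (changing $d-1$ to $d$) without altering the frequency-dependent term. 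This gives \eqref{eq:doublingL2u} for $\tilde u$ on concentric half balls.

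Second, I would translate back to $u$ via the change of variables $Y = \Psi_{X_0}(X)$. Since $|\Psi_{X_0}(X)-X_0-X|\lesssim |X|\,\hat\theta(|X|)$ and $|J_{\Psi_{X_0}}-1|\lesssim \hat\theta(|X|)$, the preimage $\Psi_{X_0}^{-1}(B_r(X_0)\cap D)$ is squeezed between the half balls $B^+_{r(1\pm C\hat\theta(r))}(0)$, and
\[
\iint_{B_r(X_0)\cap D} u^2\,dY \;=\; \iint_{\Psi_{X_0}^{-1}(B_r(X_0)\cap D)} \tilde u^2\,|J_{\Psi_{X_0}}|\,dX.
\]
The resulting multiplicative errors are of the form $1+O(\hat\theta(r))\le \exp(C\hat\theta(r))$; using $\hat\theta(r)\le \theta(4r)$, they are absorbed into the implicit constants hidden in $\lesssim$ and into the Dini-summable exponential correction already present in the upper bound, without altering the leading power-law exponents. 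Combining the two steps then gives exactly \eqref{eq:doublingL2u}.

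The main obstacle is the careful simultaneous bookkeeping of two distinct perturbation terms: the multiplicative perturbation of the frequency coming from the ODE for $\widetilde H$ (which produces the $\exp(-C\int\theta/t)$ correction sitting inside the exponent in the upper bound) and the Jacobian-plus-radial perturbation produced by $\Psi_{X_0}$ (which produces multiplicative errors sitting outside the exponent). Verifying that both can be dominated by the same Dini-summable quantity $\int_0^{4r}\theta(t)/t\,dt$ and consolidated into the single stated form is the key technical point. Once that is in place, everything else is a direct reading of \cite[Corollary 3.28]{KZ} combined with the monotonicity of $\widetilde N(X_0,\cdot)$.
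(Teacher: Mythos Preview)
Your proposal is correct and follows essentially the same route as the paper: pull back via $\Psi_{X_0}$, invoke \cite[Corollary~3.28]{KZ} together with the monotonicity of the modified frequency to control the ratio of the surface integrals $H(u\circ\Psi_{X_0},\cdot)$, integrate radially to pass to ball integrals, and then undo the change of variables. The paper implements the last step slightly differently, using the explicit inclusions $\Psi_{X_0}(B_{r/2})\subset B_r(X_0)\subset \Psi_{X_0}(B_{2r})$ from \cite[(8.16)]{KZ} rather than your Jacobian-plus-radial-perturbation description, and it records an intermediate doubling inequality $\iint_{B_{4r}}u^2\lesssim_\Lambda\iint_{B_r}u^2$ to clean up the lower bound; but these are cosmetic differences. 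One small imprecision: $\Psi_{X_0}$ does not flatten $D$ to the upper half-space (that is the role of the separate map $\Phi$ in Section~\ref{sec:flat}), so your ``half ball'' language for $\Psi_{X_0}^{-1}(B_r(X_0)\cap D)$ is not literally correct---but since $u$ is extended by zero across $\partial D$ this has no effect on the $L^2$ computation.
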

\begin{remark}
	We follow the convention in \cite{KZ} and call the above a \textit{doubling property}. But we point out that it is actually a misnomer. In fact, under the same assumption it has already been proven in \cite[Theorem 0.4]{AE} and \cite[Theorem 2.2]{KN} that there exists a constant $C>0$ such that
	\begin{equation}\label{tmp:doubling}
		\iint_{B_{2r}(X_0)} u^2 dX \leq C \iint_{B_r(X_0)} u^2 dX 
	\end{equation} 
	for every $X_0 \in \partial D \cap B_{R_0}(0)$ and $r$ sufficiently small. And \eqref{tmp:doubling} is what is usually referred to as an \textit{$L^2$-doubling property}. In Lemma \ref{lm:doublingL2u}, not only do we compare the $L^2$-norm of $u$ for a pair of balls of any radii $0<s<r$, we also get a precise estimate on the decay rate, which is very close to $d+2N_{X_0}$, the decay rate for homogeneous harmonic polynomials of degree $N_{X_0}$. So Lemma \ref{lm:doublingL2u} is much stronger than a doubling property.
\end{remark}

\begin{proof}
%
	By \cite[(8.16)]{KZ}, for $r$ sufficiently small we have
	\[ B_r(X_0) \subset B_{2r}(X_0 + 6r \hat{\theta}(2r) e_d) = \Psi_{X_0}(B_{2r}), \]
	and similarly
	\[ B_r(X_0) \supset B_{\frac{r}{2}}\left(X_0 + \frac{3r}{2} \hat{\theta} \left( \frac{r}{2} \right) e_d \right) = \Psi_{X_0}(B_{\frac{r}{2}}), \]
	where $\hat{\theta}$ is defined as in \eqref{def:that}.
	Hence
	\begin{equation}\label{tmp:L2uu}
		\iint_{B_r(X_0)} u^2 dX \lesssim \iint_{B_{2r}} \left| u\circ \Psi_{X_0} \right|^2 dY \lesssim \int_0^{2r} H(u\circ \Psi_{X_0}, \rho) d\rho, 
	\end{equation} 
	and
	\begin{equation}\label{tmp:L2ul}
		\iint_{B_r(X_0)} u^2 dX \gtrsim \iint_{B_{\frac{r}{2}}} \left| u\circ \Psi_{X_0} \right|^2 dY \gtrsim \int_0^{\frac{r}{2}} H(u\circ \Psi_{X_0}, \rho) d\rho,
	\end{equation} 
	where $H(u\circ \Psi_{X_0}, \rho)$ is defined as in \cite[(3.8)]{KZ} and it is essentially the $L^2$-surface integral of $u\circ \Psi_{X_0}$ in $B_\rho(0)$, adapted to a certain elliptic coefficient matrix.
	By \eqref{tmp:L2uu}, \eqref{tmp:L2ul}, \cite[(3.30)]{KZ}, \eqref{cond:Dini} and the monotonicity of $N_{X_0}(\cdot)$, we have
	\begin{align*}
		\frac{\iint_{B_r(X_0)} u^2 dX }{\iint_{B_s(X_0)} u^2 dX } \lesssim \frac{\int_0^{2r} H(u\circ \Psi_{X_0}, \rho) d\rho }{\int_0^{\frac{s}{2}} H(u\circ \Psi_{X_0}, \rho) d\rho } = \frac{4r}{s} \cdot \frac{\int_0^{\frac{s}{2}} H(u\circ \Psi_{X_0}, \frac{4r}{s} \rho) d\rho }{\int_0^{\frac{s}{2}} H(u\circ \Psi_{X_0}, \rho) d\rho }  \lesssim \left(\frac{4r}{s} \right)^{d+ 2N_{X_0}(2r)}.
	\end{align*}
	Since $N_{X_0}(2r)$ is uniformly bounded depending on $\Lambda$ (see \cite[Lemma 5.1]{KZ}), in particular it follows that
	\begin{equation}\label{tmp:doublingL2}
		\iint_{B_{4r}(X_0)} u^2 dX \lesssim_{\Lambda} \iint_{B_r(X_0)} u^2 dX. 
	\end{equation} 
	On the other hand, by \cite[(3.29)]{KZ} and the monotonicity of $N_{X_0}(\cdot)$, we have
	\begin{align*}
		\frac{\iint_{B_r(X_0)} u^2 dX }{\iint_{B_s(X_0)} u^2 dX } \gtrsim \frac{\iint_{B_{4r}(X_0)} u^2 dX }{\iint_{B_s(X_0)} u^2 dX } & \gtrsim \frac{\int_0^{2r} H(u\circ \Psi_{X_0}, \rho) d\rho }{\int_0^{2s} H(u\circ \Psi_{X_0}, \rho) d\rho } \\
		& = \frac{r}{s} \cdot \frac{\int_0^{2s} H(u\circ \Psi_{X_0}, \frac{r}{s} \rho) d\rho }{\int_0^{2s} H(u\circ \Psi_{X_0}, \rho) d\rho } \\
		& \gtrsim \left(\frac{r}{s} \right)^{d+ 2N_{X_0} \exp \left(-C\int_0^{4r} \frac{\theta(s)}{s} ds \right)}, 
	\end{align*} 
	where we have used \eqref{tmp:doublingL2} in the first inequality. This finishes the proof of the lemma.
\end{proof}

\section{Orthogonal transformation}\label{sec:ot}
Let $(x_0, \varphi(x_0))$ be a boundary point such that $\nabla \varphi(x_0) \neq 0$. We want to find an orthogonal transformation $O=O_{x_0}: \mathbb{R}^d \to \mathbb{R}^d$ and a function $\tilde{\varphi} = \tilde{\varphi}_{x_0}: \mathbb{R}^{d-1} \to \mathbb{R}$ such that
\[ O \text{ maps } \graph(\varphi) - (x_0,\varphi(x_0)) \text{ to } \graph(\tilde{\varphi}), \text{ and } \tilde\varphi(0)=0, \nabla \tilde\varphi(0) = 0. \]

We first determine the orthogonal matrix $O$. We write $O$ in the form of a block matrix
\[ O = \begin{pmatrix}
	\tilde O & b \\
	d^T & c
\end{pmatrix} \]
where $\tilde O$ is a $(d-1)\times (d-1)$ matrix, $b,d \in \mathbb{R}^{d-1}$ and $c\in \mathbb{R}$. Since $O$ should be an orthogonal matrix, the block matrices ought to satisfy
\begin{equation}\label{eq:Omatrix}
	\left\{\begin{array}{l}
	\tilde{O}\tilde{O}^T + b b^T = \Id_{d-1}, \\
	\tilde{O} d + cb = 0, \\
	|d|^2 + c^2 = 1.
\end{array} \right. 
\end{equation} 
Moreover, in order to guarantee that
\[
	O \left( \begin{pmatrix}
	x \\ \varphi(x)
\end{pmatrix} - \begin{pmatrix}
	x_0 \\ \varphi(x_0)
\end{pmatrix} \right) = \begin{pmatrix}
	y \\ \tilde{\varphi}(y)
\end{pmatrix}, 
\] 
or equivalently,
\begin{equation}\label{eq:Ograph}
	\left\{\begin{array}{l}
	\tilde{O}(x-x_0) + (\varphi(x) - \varphi(x_0))b = y \\
	d\cdot (x-x_0) + c(\varphi(x) - \varphi(x_0)) = \tilde{\varphi}(y)
\end{array} \right.
\end{equation} 
and the property that $\nabla \tilde{\varphi}(0) = 0$, the matrix should satisfy 
\[ d+c \nabla \varphi(x_0) =  \tilde{O}^T \nabla \tilde{\varphi}(0) + (b\cdot \nabla \tilde{\varphi}(0)) \nabla \varphi(x_0) = 0. \] Combined with \eqref{eq:Omatrix}, we just need 
\begin{equation}\label{eq:Odet}
	\left\{\begin{array}{l}
	c^2 = \left( 1+|\nabla \varphi(x_0)|^2 \right)^{-1} \neq 0, \\
	d=-c \nabla \varphi(x_0), \\
	b = \tilde{O} \nabla \varphi(x_0), \\
	\tilde{O} \left(\Id_{d-1} + \nabla \varphi(x_0) \nabla \varphi(x_0)^T \right) \tilde{O}^T = \Id_{d-1}
\end{array}    \right. 
\end{equation} 
Modulo the sign, $c\in \mathbb{R}$ is uniquely determined. Since for any non-zero vector $z\in \RR^{d-1}$ we have 
\begin{equation}\label{tmp:ev}
	z^T\left( \Id_{d-1} + \nabla \varphi(x_0) \nabla \varphi(x_0)^T \right) z = |z|^2 + \left(z\cdot \nabla \varphi(x_0) \right)^2 \geq |z|^2 >0, 
\end{equation}  
the matrix $\Id_{d-1} + \nabla \varphi(x_0) \nabla \varphi(x_0)^T$ is symmetric and positive semi-definite. We can find a solution to the last equation in \eqref{eq:Odet}, for example by letting $\tilde{O}$ be a symmetric matrix whose inverse matrix $\tilde{O}^{-1}$ is the square root of $\Id_{d-1} + \nabla \varphi(x_0) \nabla \varphi(x_0)^T$. In particular
\[ |\det\tilde{O}|^2 = \frac{1}{ \det \left(\Id_{d-1} + \nabla \varphi(x_0) \nabla \varphi(x_0)^T \right)} = \frac{1}{1+|\nabla \varphi(x_0)|^2}; \]
besides, by \eqref{tmp:ev} and by choosing $x_0$ sufficiently close to the origin so that $|\nabla \varphi(x_0)| \leq 1$, we can guarantee that the eigenvalues of $\tilde{O}$ are bounded from below and above (and the bounds are uniform for all $x_0$ near the origin).
To sum up, the orthogonal matrix is of the form
\begin{equation}\label{eq:Oform}
	O = \begin{pmatrix}
	\tilde{O}, & \tilde{O} \nabla \varphi(x_0), \\
	\left( -c\nabla \varphi(x_0) \right)^T, & c
\end{pmatrix} 
\end{equation} 
where $c\in \mathbb{R}$ and the block matrix $\tilde{O}$ satisfies \eqref{eq:Odet}.

Next we show that the image of $\graph(\varphi) - (x_0, \varphi(x_0))$ under $O$ is indeed graphical. First, considering \eqref{eq:Ograph} we look at the map
\begin{equation}\label{def:g}
	g: x\in \RR^{d-1} \mapsto \tilde{O}(x-x_0) + (\varphi(x) - \varphi(x_0)) b =: y \in \RR^{d-1}. 
\end{equation} 
Clearly $g(x_0) = 0$. We compute
\begin{equation}\label{eq:Dg}
	Dg(x)  = \tilde{O} + b \left( \nabla \varphi(x) \right)^T  = \tilde{O} + \tilde{O} \nabla \varphi(x_0) \left( \nabla \varphi(x) \right)^T. 
\end{equation}	
Hence in particular
\begin{align*}
	Dg(x)|_{x=x_0} & = \tilde{O} + \tilde{O} \nabla \varphi(x_0) \left( \nabla \varphi(x_0) \right)^T \\
	& = \tilde{O} \left( \Id_{d-1} + \nabla \varphi(x_0) \left( \nabla \varphi(x_0) \right)^T \right) \\
	& = \left( \tilde{O}^T \right)^{-1} \\
	& = \tilde{O}^{-1},
\end{align*} 
where we use \eqref{eq:Odet} and the symmetry of $\tilde{O}$ in the second to last and last equalities, respectively.
By the inverse function theorem, near $x_0$ the function $g$ has an inverse function $g^{-1}$, which is defined in a neighborhood of the origin and satisfies
\begin{equation}\label{eq:Dginv}
	Dg^{-1}(y) = \left( Dg \left( g^{-1}(y) \right)\right)^{-1} .
\end{equation}  
Therefore by defining
\begin{equation}\label{def:varphitilde}
	\tilde{\varphi}(y) = -c \nabla \varphi(x_0) \cdot (g^{-1}(y) -x_0) + c \left( \varphi(g^{-1}(y)) - \varphi(x_0) \right) 
\end{equation} 
in a neighborhood of the origin,
it satisfies the equality \eqref{eq:Ograph}. Moreover
\[ \partial_j \tilde{\varphi}(y) = \sum_i c \left( \partial_i \varphi(g^{-1}(y)) - \partial_i \varphi(x_0) \right) \partial_j \left(g^{-1}(y) \right)_i, \]
or equivalently
\begin{equation}\label{eq:varphitilde}
	\nabla \tilde{\varphi}(y) = c \left(Dg^{-1}(y) \right)^T \left( \nabla \varphi(g^{-1}(y)) - \nabla \varphi(x_0) \right). 
\end{equation} 

Moreover, we claim that for $y, y'$ sufficiently close to the origin, we have
\begin{equation}\label{cl:varphitildemoc}
	|\nabla \tilde{\varphi}(y) - \nabla \tilde{\varphi}(y') | \lesssim \theta(2|y-y'|), 
\end{equation} 
where $\theta$ is the modulus of continuity for $\nabla \varphi$.
In fact, \eqref{eq:Dg} implies that
\begin{equation}\label{eq:mcDg}
	|Dg(x) - Dg(x')| = \left| \tilde{O} \nabla \varphi(x_0) \left( \nabla \varphi(x) - \nabla \varphi(x') \right)^T \right| \lesssim \left| \nabla \varphi(x) - \nabla \varphi(x') \right| \leq \theta(|x-x'|). 
\end{equation} 
In particular, since the eigenvalues of the matrix $Dg(x_0) = \tilde{O}^{-1}$ are bounded from above and below, it follows from \eqref{eq:mcDg} that for $x$ sufficiently close to $x_0$, the eigenvalues of $Dg(x)$ are also uniformly bounded from above and below, and thus the same holds for its inverse $Dg^{-1}(y)$ (by \eqref{eq:Dginv}), for any $y$ sufficiently close to $0=g(x_0)$. Moreover, let $x=g^{-1}(y), x'=g^{-1}(y')$.
Since
\[ \left| Dg(x) \left[ \left( Dg(x) \right)^{-1} -\left( Dg(x') \right)^{-1} \right] Dg(x') \right| = \left| Dg(x') - Dg(x) \right| \lesssim \theta(|x-x'|), \]
we get 
\begin{align}
	\left|Dg^{-1}(y) - Dg^{-1}(y') \right| = \left| \left( Dg(x) \right)^{-1} -\left( Dg(x') \right)^{-1} \right|  & \lesssim |Dg^{-1}(y)| \cdot \theta(|x-x'|) \cdot |Dg^{-1}(y')| \nonumber \\
	& \lesssim \theta(|x-x'|). \label{eq:Dginvmoc}
\end{align} 
Additionally,
\begin{equation}\label{tmp:xymoc}
	|x-x'| = |g^{-1}(y) - g^{-1}(y')| \leq \|Dg^{-1}\|_\infty |y-y'| \leq 2|y-y'|. 
\end{equation} 
Therefore, by combining \eqref{eq:varphitilde}, \eqref{eq:Dginvmoc} and \eqref{tmp:xymoc}, we get
\begin{align*}
	& \left|\nabla \tilde{\varphi}(y) - \nabla \tilde{\varphi}(y') \right| \\
	& \quad = \left| c \left(Dg^{-1}(y) \right)^T \left( \nabla \varphi(g^{-1}(y)) - \nabla \varphi(x_0) \right) - c \left(Dg^{-1}(y') \right)^T \left( \nabla \varphi(g^{-1}(y')) - \nabla \varphi(x_0) \right) \right| \\
	& \quad \leq c\left|\left(Dg^{-1}(y) \right)^T \left[ \nabla \varphi(g^{-1}(y) ) - \nabla \varphi(g^{-1}(y')) \right] \right| \\
	& \qquad \qquad + c \left| \left[Dg^{-1}(y) - Dg^{-1}(y') \right]^T \left( \nabla \varphi(g^{-1}(y')) - \nabla \varphi(x_0) \right) \right| \\
	& \quad \lesssim \theta(|g^{-1}(y) - g^{-1}(y')|) + |Dg^{-1}(y) - Dg^{-1}(y')| \cdot \theta(|g^{-1}(y')-x_0|) \\
	& \quad \lesssim \theta(2|y-y'|),
\end{align*}
which finishes the proof of the claim \eqref{cl:varphitildemoc}.

\section{Flattening and extension of $u$ across the boundary}\label{sec:flat}
By the previous section, we may assume that near any boundary point $(x_0, \varphi(x_0)) \in \pD \cap B_{R_0}(0)$, we have $\nabla \varphi(x_0)=0$. If not, we just apply the orthogonal transformation $O_{x_0}$, under which the domain $D$ (locally) becomes the region above the graph $\tilde{\varphi} = \tilde{\varphi}_{x_0}$, which satisfies $\tilde{\varphi}(0) = 0$, $\nabla \tilde{\varphi}(0) = 0$ and the modulus of continuity becomes $\theta(2\cdot)$ (modulo uniform constants). Hence it suffices to consider $D$ near the boundary point $X_0 = (0,0)$ with a flat tangent, i.e. $\nabla \varphi(0) = 0$.

Let $u$ be a harmonic function in $D$. We consider the map
\begin{equation}\label{def:trsf}
	\Phi: (y,s) \in \RR^d_+ \mapsto (y,s+\varphi(y)) =: (x,t) \in D, 
\end{equation} 
and $v: \RR^d_+ \to \RR$ defined by $v(y,s) := u \circ \Phi(y,s)$. A simple computation shows that $v$ is the solution to the elliptic operator $-\divg(A(y,s) \nabla v) = 0$ in $\RR^d_+$, where the coefficient matrix $A(y,s)$ is given by
\begin{equation}\label{def:A}
	A(y,s) = (\det D\Phi ) \cdot \left( D\Phi(y,s) \right)^{-1} \left( D\Phi^T(y,s) \right)^{-1} = \begin{pmatrix}
	\Id_{d-1} & -\nabla \varphi(y) \\
	\left( -\nabla \varphi(y) \right)^T & 1+|\nabla \varphi(y)|^2
\end{pmatrix}. 
\end{equation} 
In particular $A(y,s)$ is independent of the $s$-variable, so we will denote it by $A(y)$. By the properties of $\varphi$, we know that $A(0) = \Id$ and $|A(y) - A(y')|\lesssim \theta(2|y-y'|)$.

Since $v= u \circ \Phi$ vanishes on $B_{4R_0}(0) \cap \partial \RR^d_+$, we can extend $v$ by odd reflection, i.e. we let
\begin{equation}\label{def:oddrefl}
	\tilde{v}(y,s) = \left\{\begin{array}{ll}
	v(y,s), & (y,s) \in \RR^d_+ \\
	-v(y,-s), & (y,s) \in \RR^d_-.
\end{array} \right.  
\end{equation} 
We also define
\[ \tilde{A}(y,s) := \left\{\begin{array}{ll}
	A(y,s) = \begin{pmatrix}
	\Id_{d-1} & -\nabla \varphi(y) \\
	\left( -\nabla \varphi(y) \right)^T & 1+|\nabla \varphi(y)|^2
\end{pmatrix}, & (y,s) \in \RR^d_+, \\
	\begin{pmatrix}
	\Id_{d-1} & \nabla \varphi(y) \\
	\left( \nabla \varphi(y) \right)^T & 1+|\nabla \varphi(y)|^2
\end{pmatrix}, & (y,s) \in \RR^d_-.
\end{array} \right. \]
A simple computation shows that the co-normal derivatives of $\tilde{v}(y,s)$ from above (i.e. $\RR^d_+$) and below (i.e. $\RR^d_-$) cancel each other out, or more precisely,
\[ \lim_{s\to 0+} A(y,s) \nabla v(y,s) \cdot (0,-1) + \lim_{s\to 0-} \tilde{A}(y,s) \nabla \tilde{v}(y,s) \cdot (0,1) = 0. \]
Using integration by parts, the newly-defined function $\tilde{v}$ satisfies $-\divg(\tilde{A}(y,s) \nabla \tilde{v}) = 0$ in $B_{4R}(0)$. For simplicity we still denote $\tilde{v}$ as $v$.

To summarize, by an orthogonal transformation (in Section \ref{sec:ot}), flattening the domain and an odd reflection, we have modified the original harmonic function $u$ near any boundary point $X_0 \in B_{R_0}(0) \cap \pD$ into a solution $v$ to a divergence-form elliptic operator $Lv :=-\divg(\tilde{A}(y,s) \nabla v)$ in an entire ball $B_{4R_0}(0)$, where the coefficient matrix $\tilde{A}$ is the identity matrix at the origin, and it is Dini continuous in the upper and lower half space, respectively. We emphasize that $\tilde{A}$ is not even continuous across $\partial \RR^d_+$. In general, solutions to operators of the form $L$ may not have finite vanishing order at an interior point. In fact, even if the coefficient matrix is H\"older continuous with exponent less than $1$, the corresponding solution may still have infinite vanishing order, for example see \cite{Miller}. However, since $v$ comes from the harmonic function $u$ in a $C^1$-Dini domain, with vanishing boundary data, we can show $v$ does have finite vanishing order.

By the doubling property of $u$ in Lemma \ref{lm:doublingL2u}, we can easily show the following doubling property of $v$:
\begin{lemma}\label{cor:doublingL2v}
	For any pair of radii $0<r_1<r_2$ sufficiently small, we have
	\begin{equation}\label{eq:doublingL2v}
		\left( \frac{r_1}{r_2} \right)^{d+ 2 N_{X_0}(2r_2) } \lesssim \frac{\iint_{B_{r_1}(0)} v^2 dyds }{\iint_{B_{r_2}(0)} v^2 dyds } \lesssim \left( \frac{r_1}{r_2} \right)^{d+2N_{X_0} \exp \left(-C\int_0^{4r_2} \frac{\theta(s)}{s} ds \right)}.
	\end{equation}
\end{lemma}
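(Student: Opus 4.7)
The plan is to transfer the doubling estimate for $u$ from Lemma \ref{lm:doublingL2u} back to $v$ through the two operations used to construct $v$, namely flattening via the map $\Phi$ and odd reflection across $\partial \RR^d_+$. Since the odd reflection is measure-preserving and since $\det D\Phi \equiv 1$, one has
\begin{equation*}
    \iint_{B_r(0)} v^2 \, dy\,ds \; = \; 2 \iint_{B_r^+(0)} (u\circ \Phi)^2 \, dy\,ds \; = \; 2 \iint_{\Phi(B_r^+(0))} u^2 \, dx\,dt.
\end{equation*}

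The first step is to sandwich $\Phi(B_r^+(0))$ between two caps of $D$. Because $\varphi(0)=0$, $\nabla\varphi(0)=0$ and $|\nabla\varphi(y)-\nabla\varphi(0)|\leq \theta(|y|)$, integrating along a radial segment gives $|\varphi(y)|\leq |y|\,\theta(|y|)$. A short computation of $|y|^2+(s+\varphi(y))^2$ for the forward inclusion and $|x|^2+(t-\varphi(x))^2$ for the backward one then yields
\begin{equation*}
    B_{\rho'_r}(0)\cap D \;\subset\; \Phi(B_r^+(0)) \;\subset\; B_{\rho_r}(0)\cap D,
\end{equation*}
with $\rho_r := r(1+\theta(r))$ and $\rho'_r:=r/(1+\theta(r))$. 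Writing $a(r) := \iint_{B_r(0)} u^2\, dX$ (which equals $\iint_{B_r(0)\cap D} u^2$ under the convention that $u$ is extended by zero), the previous display then gives the sandwich $2\, a(\rho'_r) \leq \iint_{B_r(0)} v^2\, dy\,ds \leq 2\, a(\rho_r)$.

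Second, since $\theta(r)\leq \theta(8R_0) < 1/72$ the ratios $\rho_r/r$ and $r/\rho'_r$ lie in a fixed bounded interval, and the exponents appearing in Lemma \ref{lm:doublingL2u} are bounded by $d+2\Lambda$ (via \cite[Lemma 5.1]{KZ}). Applying Lemma \ref{lm:doublingL2u} twice (to the pairs $(\rho'_r,r)$ and $(r,\rho_r)$) collapses to $a(\rho'_r)\asymp a(r)\asymp a(\rho_r)$, so
\begin{equation*}
    \iint_{B_r(0)} v^2 \, dy\,ds \;\asymp\; a(r).
\end{equation*}
Taking the ratio for radii $r_1<r_2$ and invoking Lemma \ref{lm:doublingL2u} one last time, now for $u$, produces both the lower and the upper bounds of \eqref{eq:doublingL2v}.

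The only delicate point is ensuring that the constants absorbed when replacing $\rho_r,\rho'_r$ by $r$ do not contaminate the stated exponents. Since those exponents are themselves uniformly bounded in $\Lambda$ and the correction factors $(1+\theta(r))$ are uniformly close to $1$, the quantities $(1+\theta(r))^{d+2\Lambda}$ collapse into the $\lesssim$ constants. Aside from this bookkeeping, the proof is essentially a unit-Jacobian change of variables plus a single invocation of the doubling property already established for $u$.
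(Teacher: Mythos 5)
Your proposal is correct and follows essentially the same route as the paper: change of variables via $\Phi$ (unit Jacobian) plus odd reflection, sandwiching $\Phi(B_r^+(0))$ between concentric caps of comparable radii, using the doubling property of $u$ from Lemma \ref{lm:doublingL2u} to identify $\iint_{B_r(0)} v^2$ with $\iint_{B_r(X_0)} u^2$ up to uniform constants, and then invoking \eqref{eq:doublingL2u} once more. The only cosmetic difference is that you use the tighter radii $r(1+\theta(r))^{\pm 1}$ where the paper uses $2r$ and $r/2$; both reduce to the same comparability statement.
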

\begin{proof}
	Recall we defined $v$ by $v = u \circ \Phi$ and reflection across $\partial \RR^d_+$. Hence
	\[ \iint_{B_r(0)} v^2 dyds = 2 \iint_{B_r^+(0)} |u \circ \Phi(y,s)|^2 dy ds \approx \iint_{\Phi(B_r^+(0))} u^2 dx dt.  \]
	For any $(y,s) \in B_r^+(0)$, since $\varphi(0) = 0$ and $\nabla \varphi(0) = 0$, it follows that
	\[ |\varphi(y)| = |\varphi(y) - \varphi(0)| \leq \sup_{\xi \in [0,y]} |\nabla \varphi(\xi)| \cdot |\xi| \lesssim r\theta(2r). \]
	Hence
	\[ \left| \Phi(y,s) \right| = \left| (y,s+ \varphi(y) ) \right| < r(1+C\theta(2r)), \]
	and 
	\[ \iint_{B_r(0)} v^2 dyds \approx \iint_{\Phi(B_r^+(0))} u^2 dx dt \leq \iint_{B_{2r}(X_0)} u^2 dX. \]
	Similarly
	\[ \iint_{B_r(0)} v^2 dyds \approx \iint_{\Phi(B_r^+(0))} u^2 dx dt \geq \iint_{B_{\frac{r}{2}}(X_0)} u^2 dX. \]
	On the other hand, in Lemma \ref{lm:doublingL2u} we have shown that
	\[ \iint_{B_{2r}(X_0)} u^2 dX \approx \iint_{B_{r}(X_0)} u^2 dX \approx \iint_{B_{\frac{r}{2}}(X_0)} u^2 dX, \]
	with constants depending on $\Lambda$. Therefore we conclude that
	\[ \iint_{B_r(0)} v^2 dyds \approx \iint_{B_{r}(X_0)} u^2 dX, \]
	and the estimates \eqref{eq:doublingL2v} follows from \eqref{eq:doublingL2u}.
\end{proof}

\begin{corollary}\label{cor:doublingLinfv}
	For any pair of radii $0<r_1<r_2$ sufficiently small, we have
	\begin{equation}\label{eq:doublingLinfv}
		\left( \frac{r_1}{r_2} \right)^{N_{X_0}(2r_2) } \lesssim \frac{\sup_{B_{r_1}(0)} |v| }{\sup_{B_{r_2}(0)} |v| } \lesssim \left( \frac{r_1}{r_2} \right)^{N_{X_0} \exp \left(-C\int_0^{4r_2} \frac{\theta(s)}{s} ds \right)}.
	\end{equation}
	
\end{corollary}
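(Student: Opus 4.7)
\medskip

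My plan is to bootstrap Corollary \ref{cor:doublingLinfv} from the $L^2$-doubling estimate in Lemma \ref{cor:doublingL2v} by trading $L^2$ norms for $L^\infty$ norms on both sides. The transfer rests on two elementary facts: the trivial pointwise bound
\[
 \iint_{B_\rho(0)} v^2 \,dyds \leq |B_\rho|\,\sup_{B_\rho(0)} |v|^2 \lesssim \rho^d \sup_{B_\rho(0)} |v|^2,
\]
and the interior sup estimate of Moser/De Giorgi type
\[
 \sup_{B_\rho(0)} |v|^2 \lesssim \rho^{-d} \iint_{B_{2\rho}(0)} v^2 \,dyds,
\]
available because $v$ satisfies $-\divg(\tilde A \nabla v) = 0$ in $B_{4R_0}(0)$ with $\tilde A$ bounded, measurable, and uniformly elliptic (the loss of continuity of $\tilde A$ across $\partial \RR^d_+$ is irrelevant here, since De Giorgi--Nash--Moser requires only boundedness and ellipticity).

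For the upper bound on the ratio I combine Moser at scale $r_1$ with the trivial bound at scale $r_2$:
\[
 \frac{\sup_{B_{r_1}(0)} |v|^2}{\sup_{B_{r_2}(0)} |v|^2}
 \lesssim \frac{r_1^{-d}\iint_{B_{2r_1}(0)} v^2 }{r_2^{-d}\iint_{B_{r_2}(0)} v^2 }
 = \left(\tfrac{r_2}{r_1}\right)^{d}\cdot\frac{\iint_{B_{2r_1}(0)} v^2 }{\iint_{B_{r_2}(0)} v^2 }.
\]
Feeding in the upper estimate of Lemma \ref{cor:doublingL2v} (applied to the pair $2r_1 < r_2$, after harmlessly shrinking $r_2$) the factor $(r_2/r_1)^d$ cancels the $d$ in the exponent and I am left with $(r_1/r_2)^{2N_{X_0}\exp(-C\int_0^{4r_2}\theta(s)/s\,ds)}$; taking square roots yields the claimed upper bound.

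For the lower bound I reverse the roles: use the trivial bound at $r_1$ and Moser at $r_2$ (equivalently just rearrange), obtaining
\[
 \frac{\sup_{B_{r_1}(0)} |v|^2}{\sup_{B_{r_2}(0)} |v|^2}
 \gtrsim \left(\tfrac{r_2}{r_1}\right)^{d}\cdot\frac{\iint_{B_{r_1}(0)} v^2 }{\iint_{B_{r_2}(0)} v^2 },
\]
and then apply the lower estimate of Lemma \ref{cor:doublingL2v}; again the $(r_2/r_1)^d$ kills the $d$ in the exponent and a square root delivers $(r_1/r_2)^{N_{X_0}(2r_2)}$.

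There is no real obstacle here: the only thing worth verifying carefully is that the interior Moser bound is genuinely applicable on full balls $B_\rho(0)$ that straddle $\partial\RR^d_+$, and this is fine precisely because $\tilde A$ was constructed in Section \ref{sec:flat} so that $-\divg(\tilde A \nabla v) = 0$ holds across $\partial \RR^d_+$ in the distributional sense, with uniformly elliptic, bounded coefficients. Everything else is bookkeeping of exponents.
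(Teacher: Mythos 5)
Your argument is correct and is essentially the paper's own proof: the paper likewise establishes $\sup_{B_r(0)}|v| \approx \bigl(r^{-d}\iint_{B_r(0)} v^2\,dyds\bigr)^{1/2}$ by combining the De Giorgi--Nash--Moser sup bound (valid across $\partial\RR^d_+$ since $\tilde A$ is merely bounded, measurable and elliptic there) with the trivial pointwise bound and the $L^2$-doubling at comparable radii, and then quotes Lemma \ref{cor:doublingL2v}. Your bookkeeping of the factor $(r_2/r_1)^d$ and the harmless constants from replacing $r_1$ by $2r_1$ is fine.
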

\begin{proof}
	By the boundary $L^\infty$ bound and the doubling property \eqref{eq:doublingL2v} we have
	\[ \sup_{B_r(0)} |v| \lesssim \left( \frac{1}{r^d} \iint_{B_{2r}(0)} v^2 dy ds \right)^{1/2} \lesssim \left( \frac{1}{r^d} \iint_{B_{r}(0)} v^2 dy ds \right)^{1/2}. 
	\]
	On the other hand
	\[ \left( \frac{1}{r^d} \iint_{B_{r}(0)} v^2 dy ds \right)^{1/2} \lesssim \sup_{B_r(0)} |v|. \]
	Therefore 
	\[ \sup_{B_r(0)} |v| \approx \left( \frac{1}{r^d} \iint_{B_{r}(0)} v^2 dy ds \right)^{1/2} \]
	and the $L^\infty$-doubling property follows from the $L^2$-doubling property in Corollary \ref{cor:doublingL2v}.
\end{proof}

Let $R \in (0, R_0)$ be sufficiently small such that Corollary \ref{cor:doublingLinfv} holds up to scale $2R$. Then for any $0<r<2R$ we have
\begin{equation}\label{tmp:decayv}
	C_1(R) \cdot r^{N_{X_0}(4R)} \leq \sup_{B_r(0)} |v| \leq C_2(R) \cdot r^{N_{X_0} \exp\left( -C \int_0^{8R} \frac{\theta(s)}{s} ds \right)}. 
\end{equation} 
For any $\alpha \in (0,1)$ sufficiently small, we may choose $R$ small enough such that
\begin{equation}\label{eq:choiceR}
	\exp\left( C \int_0^{8R} \frac{\theta(s)}{s} ds \right) \leq \frac{N_{X_0}}{N_{X_0}-\alpha}, \quad N_{X_0}(4R) \leq N_{X_0} + \alpha. 
\end{equation} 
Note that in order to satisfy the second inequality, the choice of $R$ is $X_0$-dependent. 
It then follows from \eqref{tmp:decayv} that
\begin{equation}\label{eq:decayv}
	r^{N_{X_0}+ \alpha} \lesssim \sup_{B_r(0)}|v| \lesssim r^{N_{X_0} - \alpha}. 
\end{equation} 
In particular, since $\alpha<1$, it follows that
\[ \limsup_{Y \to 0} \frac{|v(Y)|}{|Y|^{N_{X_0} + 1}} = +\infty. \]
On the other hand, by the boundary gradient estimate with Dini-continuous coefficient in $\RR^d_+$ and in $\RR^d_-$ (see \cite{DEK}), for any $Y \in B_{R}(0)$ we have
\[ \frac{|v(Y)|}{|Y|} \lesssim \sup_{B_{R}(0)} |\nabla v| \lesssim \frac{1}{R} \left(\frac{1}{R^d} \iint_{B_{2R}(0)} v^2 dyds \right)^{1/2} < + \infty. \]
Hence
\[ \frac{ \sup_{B_r(0)} |v|}{r} \leq C(R) < +\infty. \]
This estimate, combined with \eqref{eq:decayv}, implies that for any $k=1, \cdots, N_{X_0}-1$ (or for $k=1$ when $N_{X_0} = 1$)  we have
\[ |v(Y)| \leq C_k |Y|^k \quad \text{ for any } Y \in B_{R}(0). \]
We consider two cases 
\[ \text{either } \limsup_{Y \to 0} \frac{|v(Y)|}{|Y|^{N_{X_0}}} = +\infty \text{ or } \limsup_{Y \to 0} \frac{|v(Y)|}{|Y|^{N_{X_0}}} < +\infty. \]
(When $N_{X_0} = 1$ we can only have the second case.)
In both cases, there exists $N \in \mathbb{N}$
such that 
\begin{equation}\label{as:vNorder}
	|v(Y)| \leq C_N |Y|^N \quad \text{ for any } Y \in B_{2R}(0), 
\end{equation} 
and
\begin{equation}\label{as:vNordersharp}
	\limsup_{Y \to 0} \frac{|v(Y)|}{|Y|^{N+1}} = +\infty. 
\end{equation} 
We call $N$ the \textit{vanishing order} of $v$ (at the origin).
Notice that the integer $N = N_{X_0} - 1$ in the first case, and $N= N_{X_0}$ in the second case. A priori we can not rule out the first case, but at the end of the paper we will show it is impossible and $v$ does have vanishing order exactly $N_{X_0}$.

We remark that a priori, we only know there exist $R' = R'(X_0)$, possibly smaller than $R$ chosen in \eqref{eq:choiceR}, and $C'_N>0$, such that 
\begin{equation}\label{tmp:vNorder}
	|v(Y)| \leq C'_N |Y|^N \quad \text{ for any } Y \in B_{2R'}(0), 
\end{equation} 
i.e. the inequality \eqref{as:vNorder} holds in a smaller ball.
When $Y \in B_{2R}(0) \setminus B_{2R'}(0)$, by the upper bound in \eqref{eq:decayv}, we have
\[ |v(Y)| \leq C |Y|^{N_{X_0}-\alpha} \leq C' R^{N_{X_0}-\alpha} \leq C(R', R, N_{X_0}) (2R')^N \leq C_N |Y|^N. \]
Therefore \eqref{tmp:vNorder} holds for all $Y\in B_{2R}(0)$, possibly with a bigger constant $C_N\geq C'_N$.

\section{Proof of the expansion}\label{sec:vexp}


In this section, we will prove that 
there exists a non-trivial homogeneous harmonic polynomial $P_N$ of degree $N$, such that in $B_{R/2}(0)$ $v$ has the expansion
\begin{equation}\label{eq:vexp}
	v(y,s) = P_N\left( y,s \right) + \psi(y,s), 
\end{equation} 
where 
\begin{equation}\label{eq:error}
	\left| \psi(y,s)  \right| \leq CC_N |(y,s)|^N \tilde{\theta}(|(y,s)| ), 
\end{equation} 
and $\tilde{\theta}(\cdot)$ is defined in \eqref{def:errordecay} and it satisfies that $\tilde{\theta}(r) \to 0$ as $r\to 0$.

%

For simplicity, we denote $r:=|(y,s)|$. Assume that $0<r \leq R/2$.
We rewrite the equation $-\divg(\tilde{A}(y,s) \nabla v) = 0$ as
\[ -\Delta v = \divg\left((\tilde{A}(y,s)-\Id) \nabla v \right). \]
Note that the coefficient matrix satisfies $\tilde{A}(0) = A(0) = \Id$.
We denote 
\[ \vec{f}(y,s) := (\tilde{A}(y,s) - \Id) \nabla v(y,s). \]
In \cite{DEK} the authors proved that a solution to an elliptic operator with Dini-continuous coefficients and which vanishes on an open set of the boundary satisfies the boundary gradient estimate. Applying it to $\RR^d_+$ and $\RR^d_-$ respectively, we get
\begin{equation}\label{eq:gradvv}
	|\nabla v(y,s)| 
	\lesssim \frac{1}{r} \left(\frac{1}{r^d} \iint_{B_{2r}(0)} v^2 ~dZ \right)^{1/2} \lesssim \frac{1}{r} \sup_{B_{2r}(0)} |v|.
\end{equation}
Combined with the estimate \eqref{as:vNorder}, we get
\begin{equation}\label{est:vecf}
	|\vec{f}(y,s)| \leq \left| \tilde{A}(y,s) - \Id \right| \cdot |\nabla v(y,s)| \lesssim \frac{\theta(2r)}{r} \cdot \sup_{B_{2r}(0)} |v| \lesssim \theta(2r) r^{N-1},
\end{equation} 
where the constant is just a constant multiple of the constant $C_N$ in the estimate \eqref{as:vNorder}.

Let $\zeta$ be a smooth cut-off function, such that $0\leq \zeta \leq 1$, $\zeta \equiv 1$ on $B_{R/2}(0)$, $\zeta$ is compactly supported in $B_R(0)$. Let $\Gamma(\xi) = c_d |\xi|^{2-d}$ be the fundamental solution of the Laplacian in $\RR^d$ with $d\geq 3$. (The proof for the planar case $d=2$ with $\Gamma(\xi) = c \log|\xi|$ is similar.) In the ball $B_R(0)$ we define
\begin{equation}\label{def:w}
	w(Y):= \iint_{\left\{|Z|<R\right\}} \Gamma(Y-Z) \divg(\vec{f} \zeta)(Z) ~dZ. 
\end{equation} 
By the divergence theorem, we have 
\begin{align*}
	w(Y)= \iint_{\left\{|Z|<R\right\}} \Gamma(Y-Z) \divg(\vec{f} \zeta)(Z) ~dZ & = -\iint_{\{|Z|<R\}} \nabla_Z \left( \Gamma(Y-Z) \right) \cdot \vec{f}\zeta(Z) ~dZ \\
	& = \iint_{\{|Z|<R\}} \nabla \Gamma(Y-Z) \cdot \vec{f}\zeta(Z) ~dZ.
\end{align*} 
By considering the above integral in the regions $\{|Z|< 2|Y|\}$ and $\{2|Y| \leq |Z| < R\}$, one can show it is well-defined, and hence $w(Y)$ is well-defined. Moreover, it satisfies 
\[ -\Delta w(Y) = \divg(\vec{f} \zeta)(Y) = -\Delta v(Y), \quad \text{ for } Y\in B_{R/2}(0), \]
i.e. $v-w$ is a harmonic function in $B_{R/2}(0)$. Hence $v-w(Y)$ can be written as the infinite sum of homogeneous harmonic polynomials.
In particular, we have
\begin{equation}\label{eq:P1exp}
	v -w(Y) = P_1(Y) + \psi_1(Y),
\end{equation} 
where $P_1$ is a harmonic polynomial 
of degree at most $N$, and the error term $\psi_1$ satisfies $|\psi_1(Y)| \leq C_1|Y|^{N+1}$, where $C_1$ only depends on the radius $R$ and the constant $C_N$ in \eqref{as:vNorder}.

Next we consider the Taylor expansion of $\nabla \Gamma(\cdot - Z)$
near the origin. Let $\beta = (\beta_1, \cdots, \beta_d)$ denote a $d$-index. For each $k\in \{0, \cdots, N\}$, we define an $\RR^d$-valued function as follows
\[ 
	\wGk(Y,Z) := \sum_{|\beta|=k} D^\beta \nabla \Gamma(-Z) \frac{Y^\beta}{\beta!}. \]
For fixed $Z \in \RR^d \setminus \{0\}$, the function $\wGk(\cdot, Z)$ is a harmonic homogeneous polynomial 
of degree $k$.
Besides, since
\[ 
	|D^\beta \nabla \Gamma(-Z)| \lesssim |Z|^{1-d-|\beta|}, \]
we have
\begin{equation}\label{est:wGk}
	\left| \wGk(Y,Z) \right| \leq C_k |Z|^{1-d-k}|Y|^k,
\end{equation}
where the constant $C_k$ depends on $k$ as well as the dimension $d$. 

Let
\begin{equation}\label{def:P2}
	P_2(Y) := \iint_{\{ |Z|<R\}}  \sum_{k=0}^N \wGk(Y,Z) \cdot \vec{f}\zeta(Z) ~dZ. 
\end{equation} 
Since $\wGk$ is not well defined at $Z=0$, we first need to justify that the above integral is well-defined.
In fact, for any $\delta \in (0, R)$, let
\[ f_\delta(Y):=  \iint_{\{ \delta\leq |Z|<R\}}  \sum_{k=0}^N \wGk(Y,Z) \cdot \vec{f}\zeta(Z) ~dZ.\]
By \eqref{est:wGk} and \eqref{est:vecf}, we have
\begin{align*}
	|f_\delta(Y)| & \leq \sum_{k=0}^N \iint_{\{ \delta\leq |Z|<R\}}  \left| \wGk(Y,Z) \right| \cdot \left| \vec{f}(Z) \right| ~dZ \\
	& \lesssim \sum_{k=0}^N |Y|^k \int_{\delta}^R s^{N-k-1} \theta(2s) ~ds \\
	& \leq \sum_{k=0}^{N-1} |Y|^k R^{N-k} \theta(2R) + |Y|^N \int_{2\delta}^{2R} \frac{\theta(s)}{s} ~ds,
\end{align*}
which is uniformly bounded as $\delta \to 0$. 
Moreover, let $\gamma = (\gamma_1, \cdots, \gamma_d)$ be a $d$-index, such that $|\gamma| = j \in \{0, 1, \cdots, N \} $. Notice that when we take the $Y$-derivative of $\wGk$, it does not affect the coefficients which just depend on $Z$. Then similarly we obtain
\[ 
	|D^\gamma f_\delta(Y)| \lesssim \sum_{k=j}^{N-1} |Y|^{k-j} R^{N-k} \theta(2R) + |Y|^{N-j} \int_{2\delta}^{2R} \frac{\theta(s)}{s} ~ds,  \]
which is also uniformly bounded as $\delta \to 0$. 
(When $j=N$ the first term on the right hand side does not appear.) Since $f_\delta(Y)$ is a polynomial of degree at most $N$, it is completely determined by $D^\gamma f_\delta(0)$ with indices $|\gamma| \in \{0, 1, \cdots, N\}$.
 Therefore as $\delta \to 0$ (modulo passing to a subsequence), the sequence $f_\delta(Y)$ converges to $P_2(Y)$ in $C^j_{loc}(\RR^d)$, for any $j\in \NN$.
  Therefore $P_2$ is well-defined. Moreover, since $f_\delta(Y)$ is a harmonic function for any $\delta>0$, the limit function $P_2(Y)$ is a harmonic polynomial of degree less than or equal to $N$.

We will estimate the error
\begin{equation}\label{eq:P2exp}
	\psi_2(Y):= w(Y) - P_2(Y) = \iint_{\{|Z|<R\}} \left( \nabla \Gamma(Y-Z) - \sum_{k=0}^N \wGk(Y,Z) \right) \cdot \vec{f}\zeta(Z) ~dZ. 
\end{equation} 
For each $\tau>0$ we denote
\begin{equation}\label{def:vsup}
	\bar v(\tau) := \sup_{B_{\tau}(0)} |v|. 
\end{equation} 
By the estimate \eqref{as:vNorder}, we know that $\bar{v}(\tau) \lesssim \tau^N$ whenever $0<\tau \leq 2R$.
Denote $r=|Y|< R/2$.
We split the integral in \eqref{eq:P2exp} into three parts:
\[ \I := \iint_{\{|Z|< 2r\}} \nabla \Gamma(Y-Z) \cdot \vec{f}\zeta(Z) ~dZ, \]
\[ \II := \iint_{\{ |Z|< 2r\}} \sum_{k=0}^N \wGk(Y,Z) \cdot \vec{f}\zeta(Z) ~dZ, \]
\[ \III := \iint_{\{2r \leq |Z| <R\}} \left( \nabla \Gamma(Y-Z) - \sum_{k=0}^N \wGk(Y,Z) \right) \cdot \vec{f}\zeta(Z) ~dZ. \]
By \eqref{est:vecf} and the bound on the fundamental solution $\Gamma$, we can easily estimate
\begin{align}
	|\I| & \lesssim\iint_{\{|Z|< 2r\}} |Y-Z|^{1-d} \cdot |\vec{f}(Z)| ~dZ \nonumber \\
	& \lesssim \frac{\theta(4r)}{r} \cdot \bar{v}(4 r) \cdot \iint_{\{|X|< 3r\}} |X|^{1-d} ~dX \nonumber \\ 
	& \lesssim \theta(4r) \cdot \bar{v}(4 r) \label{er:Gamma1-pre} \\
	& \lesssim \theta(4r) r^N.\label{er:Gamma1}
\end{align}
Combining \eqref{est:wGk} and \eqref{est:vecf}, we get
\begin{align}
	|\II| & \lesssim \sum_{k=0}^N r^k  \cdot  \iint_{\{ |Z| < 2r \}} |\vec{f}(Z)||Z|^{1-d-k} ~dZ \nonumber \\
	& \lesssim \sum_{k=0}^N r^k \int_0^{2r} \tau^{1-d-k} \cdot \frac{\theta(2\tau)}{\tau} \cdot \bar{v}(2\tau) \cdot \tau^{d-1} ~d\tau \nonumber  \\
	& \lesssim r^N \int_0^{2r} \frac{\theta(2\tau)}{\tau} \cdot \frac{ \bar{v}(2\tau)}{\tau^N} ~d\tau \label{er:Gamma2-pre} \\
	& \lesssim \left( \int_0^{4r} \frac{\theta(s)}{s} ~ds \right) r^N.\label{er:Gamma2}
\end{align}
Lastly, since $\nabla \Gamma(\cdot)$ is smooth away from the origin, on the set $\{|Z| \geq 2r\}$ we have the expansion
\[ 
	\nabla \Gamma(Y-Z) - \sum_{k=0}^N \wGk(Y,Z) = \sum_{|\beta|=N+1} D^\beta \nabla \Gamma(\theta Y-Z) \frac{\left(Y \right)^\beta}{\beta!} \]
for some $\theta \in [0,1]$.
Hence by the decay of the fundamental solution, we have
\begin{align*}
	\left| \nabla \Gamma(Y-Z) - \sum_{k=0}^N \wGk(Y,Z) \right| \leq \sum_{|\beta|=N+1} |\theta Y - Z|^{1-d-|\beta|} \cdot \frac{r^{|\beta|}}{\beta!} \lesssim \frac{r^{N+1}}{|Z|^{d+N}}.
\end{align*}
Therefore
\begin{align}
	|\III| & \lesssim \iint_{\{2r \leq |Z| < R\}} \frac{r^{N+1}}{|Z|^{d+N}} \cdot |\vec{f}(Z)| ~dZ \nonumber \\
	& \lesssim r^{N+1} \int_{2r}^R \frac{1}{\tau^{d+N}} \cdot \frac{\theta(2\tau)}{\tau} \cdot \bar{v}(2\tau) \cdot \tau^{d-1} ~d\tau \nonumber \\
	& \lesssim r^{N+1} \int_{2r}^R \frac{\theta(2\tau)}{\tau^2} \cdot \frac{\bar{v}(2\tau)}{\tau^N} ~d\tau \label{er:Gamma3-pre}\\
	& \lesssim  r^{N} \left(r \int_{4r}^{2R} \frac{\theta(s)}{s^2} ~ds \right).\label{er:Gamma3}
\end{align}
We claim that
\begin{equation}\label{cl:thetacv}
	r \int_{4r}^{2R} \frac{\theta(s)}{s^2} ~ds \to 0 \quad \text{ as } r \to 0. 
\end{equation} 
In fact, we split into two cases: either $\int_0^{2R} \frac{\theta(s)}{s^2} ~ds < +\infty$ (which happens if $\theta(s)$ decays faster than $s$), or $\int_r^{2R} \frac{\theta(s)}{s^2} ~ds \to +\infty$ as $r\to 0+$. In the first case,
\[ r \int_{4r}^{2R} \frac{\theta(s)}{s^2} ~ds \leq \left( \int_0^{2R} \frac{\theta(s)}{s^2} ~ds \right) r \to 0 \quad \text{ as } r \to 0; \]
and in the second case, applying l'Hospital rule we get
\[ \lim_{r\to 0+} r \int_{4r}^{2R} \frac{\theta(s)}{s^2} ~ds = \lim_{r\to 0+} \dfrac{-\frac{\theta(4r)}{4r^2} }{-\frac{1}{r^2}} = \lim_{r\to 0+} \frac{\theta(4r)}{4} = 0, \]
which also proves the claim \eqref{cl:thetacv}.
Combining \eqref{eq:P2exp}, \eqref{er:Gamma1}, \eqref{er:Gamma2}, \eqref{er:Gamma3} and \eqref{cl:thetacv}, we conclude that
\begin{equation}\label{eq:errordecay}
	\left|\psi_2(Y) \right| \lesssim r^N \left(\theta(4r) + \int_0^{4r} \frac{\theta(s)}{s} ~ds  + r \int_{4r}^{2R} \frac{\theta(s)}{s^2} ~ds \right) =: r^N \tilde{\theta}(r),
\end{equation}
with 
\begin{equation}\label{def:errordecay}
	\tilde{\theta}(r) = \theta(4r) + \int_0^{4r} \frac{\theta(s)}{s} ~ds  + r \int_{4r}^{2R} \frac{\theta(s)}{s^2} ~ds \to 0 \quad \text{ as } r \to 0. 
\end{equation} 

Finally, combining \eqref{eq:P1exp} and \eqref{eq:P2exp}, we have the following expansion in $B_{R/2}(0)$:
\begin{equation}\label{tmp:exp}
	v(Y) = P_1(Y) + P_2(Y) + \psi_1(Y) + \psi_2(Y), 
\end{equation} 
where $P_1 + P_2$ is a harmonic polynomial 
of degree less or equal to $N$, $\left| \psi_1(Y) \right| \leq C_1 r^{N+1}$ and $\left| \psi_2(Y) \right| \leq C_2 r^N \tilde{\theta}(r)$.
%
In the special case when $\theta(s) \sim s^\alpha$ with $\alpha\in (0,1)$, it is easy to see that $\tilde{\theta}(r) \sim r^{\alpha}$.
%
By the estimate \eqref{as:vNorder}, we know that either $P_1 + P_2 \equiv 0$, or it is nontrivial and homogeneous of degree exactly $N$. In the special case when $\theta(s) \sim s^\alpha$, it is easy to rule out the first case, as is shown in \cite{Han}. However, the proof is more delicate for general Dini parameters.

Assume for the sake of contradiction that $P_1 + P_2 \equiv 0$, then \eqref{tmp:exp} implies that
\begin{equation}\label{tmp:expcont}
	v(Y) = \psi_1(Y) + \psi_2(Y),  
\end{equation} 
where $\left| \psi_1(Y) \right| \leq C_1 r^{N+1}$. Recall that we split $\psi_2(Y)$ into three terms $\I, \II, \III$. Combining \eqref{tmp:expcont} and \eqref{er:Gamma1-pre}, \eqref{er:Gamma2-pre}, \eqref{er:Gamma3-pre}, we get
\begin{align}
	\left| v(Y) \right| & \leq |\I| + |\II| + |\III| + |\psi_1(Y)| \nonumber \\
	& \lesssim \theta(4r) \cdot \bar{v}(4 r) + r^N \int_0^{2r} \frac{\theta(2\tau)}{\tau} \cdot \frac{\bar{v}(2\tau)}{\tau^N} ~d\tau + r^{N+1} \int_{2r}^R \frac{\theta(2\tau)}{\tau^2} \cdot \frac{\bar{v}(2\tau)}{\tau^N} ~d\tau + C_1 r^{N+1}.\label{tmp:errorest}
\end{align}

Now let $\rho \in (0, R/2)$ be fixed, and we let $Y$ vary in the annulus $B_{\rho}(0) \setminus B_{\rho/2}(0)$. Then $r=|Y| \in [\rho/2, \rho)$, and \eqref{tmp:errorest} implies
\begin{equation}\label{tmp:errorest3}
	|v(Y)| \lesssim \theta(4\rho) \cdot \bar{v}(4\rho) + 
	\rho^N \int_0^{2\rho} \frac{\theta(2\tau)}{\tau} \cdot \frac{\bar{v}(2\tau)}{\tau^N} d\tau
	+ \rho^{N+1} \int_{\rho}^R \frac{\theta(2\tau)}{\tau^2} \cdot \frac{\bar{v}(2\tau)}{\tau^N} d\tau + C_1 \rho^{N+1}.
\end{equation}
Similarly to \eqref{def:vsup}, we define
\[ \bar{\bar v}(\tau) := \sup_{B_{\tau}(0) \setminus B_{\tau/2}(0) } |v|, \quad \text{ for any } \tau>0. \]
We claim that 
\begin{equation}\label{claim:layercake}
	\bar{v}(\tau) \lesssim \bar{\bar v}(\tau) \leq \bar{v}(\tau). 
\end{equation} 
The second equality is simply because of the inclusion $B_\tau(0) \setminus B_{\tau/2}(0) \subset B_\tau(0)$. To prove the first inequality, we note that
\begin{equation}\label{tmp:layercake}
	\bar{v}(\tau) = \sup_{k \in \NN_0} \bar{\bar v}(2^{-k} \tau). 
\end{equation} 
For each $k\in \NN$, as in the proof of Corollary \ref{cor:doublingLinfv} and Lemma \ref{cor:doublingL2v} (applying the same argument to annulii instead of solid balls), we get
\[ \frac{ \bar{\bar v}(2^{-k}\tau)}{\bar{\bar v}(\tau)} \lesssim (2^{-k})^{N_{X_0}-\alpha} \leq (2^{-k})^{1-\alpha}, \quad \text{ for any } 0< \tau< 2R. \]
To get the exponent $N_{X_0}-\alpha$ in the first inequality, we have used the choice of $R$ in \eqref{eq:choiceR}.
Combined with \eqref{tmp:layercake}, we get 
\[ \bar{v}(\tau) = \sup_{k \in \NN_0} \bar{\bar v}(2^{-k} \tau) \lesssim \bar{\bar v}(\tau),  \]
with a constant depending on $\alpha$. This finishes the proof of the claim.
Applying the doubling property in Corollary \ref{cor:doublingLinfv} and \eqref{claim:layercake} to the estimate \eqref{tmp:errorest3}, we get
\begin{align}
	\bar{\bar v}(\rho) & \lesssim \theta(4\rho) \cdot \bar{v}(4\rho) +
	\rho^N \int_0^{2\rho} \frac{\theta(2\tau)}{\tau} \cdot \frac{\bar{v}(2\tau)}{\tau^N} d\tau
	 + \rho^{N+1} \int_{\rho}^R \frac{\theta(2\tau)}{\tau^2} \cdot \frac{\bar{v}(2\tau)}{\tau^N} d\tau + C_1 \rho^{N+1} \nonumber \\
	& \lesssim \theta(4\rho) \cdot \bar{\bar v}(\rho) + 
	\rho^N \int_0^{2\rho} \frac{\theta(2\tau)}{\tau} \cdot \frac{\bar{\bar v}(\tau/2)}{\tau^N} d\tau
	+ \rho^{N+1} \int_{\rho}^R \frac{\theta(2\tau)}{\tau^2} \cdot \frac{\bar{\bar v}(\tau)}{\tau^N} d\tau + C_1 \rho^{N+1} \nonumber \\
	& \lesssim \theta(4\rho) \cdot \bar{\bar v}(\rho) + 
	\rho^N \int_0^{\rho} \frac{\theta(4\tau)}{\tau} \cdot \frac{\bar{\bar v}(\tau)}{\tau^N} d\tau 
	+ \rho^{N+1} \int_{\rho}^{R} \frac{\theta(2\tau)}{\tau^2} \cdot \frac{\bar{\bar v}(\tau)}{\tau^N} d\tau + C_1 \rho^{N+1}.\label{eq:errorest}
\end{align}
We choose $R$ sufficiently small so that
\[ C \cdot \theta(2R) < 1/2
\]
where $C>0$ denotes the constant in front of the first term in \eqref{eq:errorest}. This way, we can move the first term to the left hand side for any $\rho < R/2$, and \eqref{eq:errorest} becomes
\begin{equation}\label{eq:errorest2}
	\bar{\bar v}(\rho) \lesssim \rho^N \int_0^{\rho} \frac{\theta(4\tau)}{\tau} \cdot \frac{\bar{\bar v}(\tau)}{\tau^N} d\tau 
	+ \rho^{N+1} \int_{\rho}^{R} \frac{\theta(2\tau)}{\tau^2} \cdot \frac{\bar{\bar v}(\tau)}{\tau^N} d\tau + C_1 \rho^{N+1}
\end{equation}
By setting
\[ h(\tau) := \frac{\bar{\bar v}(\tau)}{\tau^N} \]
and dividing both sides of \eqref{eq:errorest2} by $\rho^N$, we get
\begin{equation}\label{eq:errorest3}
	h(\rho) \lesssim \int_0^\rho \frac{\theta(4\tau)}{\tau} \cdot h(\tau) d\tau + \rho \int_\rho^R \frac{\theta(2\tau)}{\tau^2} \cdot h(\tau) d\tau + C_1 \rho. 
\end{equation} 
For every $\epsilon>0$, let
\[ g_\epsilon(\tau) := \frac{h(\tau)}{\tau + \epsilon} = \frac{\bar{\bar v}(\tau)}{\tau^N} \cdot \frac{1}{\tau+\epsilon} >0. \]
By \eqref{as:vNorder}, each $g_\epsilon(\cdot)$ is bounded from above (with a constant depending on $\epsilon$):
\[ g_\epsilon(\tau) \leq \frac{C_N}{\tau+\epsilon} < \frac{C_N}{\epsilon} < +\infty. \]
Let $\rho_\epsilon := \rho+\epsilon$ and $\tau_\epsilon := \tau+\epsilon$.
Dividing both sides of \eqref{eq:errorest3} by $\rho_\epsilon$ and plugging in $g_\epsilon(\cdot)$, the inequality becomes
\begin{align*}
	g_\epsilon(\rho) = \frac{h(\rho)}{\rho_\epsilon} \lesssim \frac{1}{\rho_\epsilon} \int_0^\rho \frac{\theta(4\tau)}{\tau} \cdot g_\epsilon(\tau) \tau_\epsilon d\tau + \frac{\rho}{\rho_\epsilon} \int_\rho^R \frac{\theta(2\tau)}{\tau} \cdot g_\epsilon(\tau) \frac{\tau_\epsilon}{\tau} d\tau + C_1.
\end{align*}
Notice that $\tau_\epsilon < \rho_\epsilon$ when $\tau< \rho$, and
\[ \frac{\rho}{\rho_\epsilon} \cdot \frac{\tau_\epsilon}{\tau} < 1 \text{ when } \tau > \rho. \]
It then follows that
\begin{align}
	g_\epsilon(\rho) & \lesssim \int_0^\rho \frac{\theta(4\tau)}{\tau} \cdot g_\epsilon(\tau) d\tau + \int_\rho^R \frac{\theta(2\tau)}{\tau} \cdot g_\epsilon(\tau) d\tau + C_1 \label{tmp:epbc} \\
	& \leq C_2 \int_0^R \frac{\theta(4\tau)}{\tau} \cdot g_\epsilon(\tau) d\tau + C'_1 \nonumber \\
	& \leq C_2 \sup_{\tau \in [0,R]} g_\epsilon \cdot \int_0^R \frac{\theta(4\tau)}{\tau} d\tau + C'_1, \label{tmp:epac}
\end{align} 
where $C_2$ is chosen to be the larger constants in front of the first two terms in the right hand side of \eqref{tmp:epbc}. Since \eqref{tmp:epac} holds for any $\rho < R/2$, we can take the supremum of $\rho \in [0, R/2]$ and obtain
\begin{equation}\label{tmp:epR}
	\sup_{\tau \in [0, R/2]} g_\epsilon \leq C_2 \sup_{\tau \in [0,R]} g_\epsilon \cdot \int_0^R \frac{\theta(4\tau)}{\tau} d\tau + C'_1. 
\end{equation} 
For any $\tau$ satisfying $R/2 \leq \tau \leq R$, by the doubling property of $\bar{\bar v}$ we have
\[ g_\epsilon(\tau) = \frac{\bar{\bar v}(\tau)}{\tau^N(\tau+\epsilon)} \leq C_3 \frac{\bar{\bar v}\left(\frac{R}{2} \right)}{\left(\frac{R}{2} \right)^N \left( \frac{R}{2} + \epsilon \right)} = C_3 \cdot g_\epsilon\left(\frac{R}{2} \right) \leq C_3 \sup_{\tau\in [0,R/2]} g_\epsilon. \]
Hence \eqref{tmp:epR} can be rewritten as
\begin{equation}\label{eq:epR}
	\sup_{\tau \in [0, R/2]} g_\epsilon \leq C_2 C_3 \sup_{\tau \in [0,R/2]} g_\epsilon \cdot \int_0^R \frac{\theta(4\tau)}{\tau} d\tau + C'_1. 
\end{equation} 
We can choose $R$ sufficiently small so that
\[ C_2 C_3 \int_0^R \frac{\theta(4\tau)}{\tau} d\tau < \frac12, \]
and thus \eqref{eq:epR} implies that
\[ \sup_{\tau \in [0, R/2]} g_\epsilon \leq 2C'_1 < +\infty. \]
Since each $g_\epsilon$ has a uniform upper bound independent of the parameter $\epsilon$, we conclude that
\[ \frac{\bar{\bar v}(\tau)}{\tau^{N+1}} = \lim_{\epsilon \to 0} g_\epsilon(\tau) \leq 2C'_1 < +\infty. \]
%
In particular
\[ \limsup_{\rho \to 0} \frac{\bar{\bar v}(\rho)}{ \rho^{N+1}} \leq 2C'_1 < +\infty. \]
On the other hand, by \eqref{claim:layercake} and \eqref{as:vNordersharp} we also know
\[ \limsup_{\rho \to 0} \frac{\bar{\bar v}(\rho)}{ \rho^{N+1}} \approx \limsup_{\rho \to 0} \frac{\bar{v}(\rho)}{ \rho^{N+1}} = \limsup_{Y\to 0} \frac{|v(Y)|}{|Y|^{N+1}} = +\infty. \]
This is a contradiction. Therefore we have shown that in the expansion \eqref{tmp:exp}, it is impossible that $P_1+P_2$ is trivial, and thus it must be a non-trivial homogeneous harmonic polynomial of degree exactly $N$. This finishes the proof of \eqref{eq:vexp} with the desired decay. 

We remark that if $N = N_{X_0} - 1$, by the expansion \eqref{eq:vexp} it is impossible that
\[ \sup_{B_r(0)} |v| \lesssim r^{N_{X_0} - \alpha}, \]
as is shown in \eqref{eq:decayv}. Therefore we must have that the degree $N$ is exactly $N_{X_0}$, and in particular
\begin{equation}\label{eq:vdecayN}
	|v(Y)| \leq C_N |Y|^{N_{X_0}} \quad \text{ for any } Y \in B_{R/2}(0), 
\end{equation} 
and
\[ \limsup_{Y \to 0 } \frac{|v(Y)|}{|Y|^{N_{X_0} + 1}} = 0. \]

\section{Gradient estimate for the error term}\label{sec:graderror}
In this section we estimate the gradient of the error term $\psi$. 
We first remark that $\psi$ also satisfies 
\[ \psi \equiv 0 \text{ on } B_R(0) \cap \partial \RR^d_+. \] 
Since $v$ vanishes on the boundary, it suffices to show that $P_N$ vanishes as well on $\partial \RR^d_+$. If not, since $P_N$ is a homogeneous function, there exists a  unit vector $\vec{e} \in \partial \RR^d_+$ such that $P_N(\vec{e}) \neq 0$. Moreover,
\begin{equation}\label{tmp:errorg1}
	P_N(r \vec{e}) = r^N P_N(\vec{e}) \quad \text{ for any } r>0. 
\end{equation} 
On the other hand, by the estimate \eqref{eq:error} we have
\[ \left| \psi(r\vec{e}) \right| \leq CC_N r^N \tilde{\theta}(r). \]
Hence for any $0< r< R/2$ we always have
\begin{equation}\label{tmp:errorg2}
	\left| P_N(r\vec{e}) \right| = \left| v-\psi(r\vec{e}) \right| = \left|\psi(r\vec{e}) \right| \leq C' r^N \tilde{\theta}(r). 
\end{equation} 
Combining \eqref{tmp:errorg1}, \eqref{tmp:errorg2} and letting $r\to 0$, we get $P_N(\vec{e}) = 0$, which is a contradiction. Therefore $P_N \equiv 0$ on $\partial \RR^d_+$, and hence $\psi \equiv 0$ on $\partial \RR^d_+ \cap B_R(0)$.

Since $v$ satisfies $-\divg(A(\cdot) \nabla v) = 0 $ and $P_N$ is a harmonic function, we have
\begin{align*}
	-\divg(A(\cdot) \nabla \psi) = -\divg(A(\cdot) \nabla (v-P_N)) & = -\divg(A(\cdot) \nabla v) + \Delta P_N + \divg((A(\cdot) - \Id) \nabla P_N ) \\
	& = \divg((A(\cdot) - \Id) \nabla P_N ).
\end{align*} 
That is to say, the error term $\psi$ satisfies
\begin{equation}\label{eq:psi}
	\left\{\begin{array}{ll}
	-\divg(A(\cdot) \nabla \psi) = \divg \vec{g}, & \text{ in } B_R^+(0):= B_R(0) \cap \RR^d_+ \\
	\psi = 0, & \text{ on } B_R(0) \cap \partial \RR^d_+
\end{array} \right. 
\end{equation} 
where $\vec{g}$ is defined by 
\[ \vec{g}(Z) = (A(Z) - \Id) \nabla P_N(Z) \] in the upper half space. Notice that when $N=1$, $P_N$ must be a linear function and thus $\nabla P_N$ is a constant vector; when $N \geq 2$, $\nabla P_N$ is (at least) Lipschitz continuous. In both cases, it follows that $\vec{g}$ is Dini continuous. Recall that the coefficient matrix $A(\cdot)$ in the equation \eqref{eq:psi} is also Dini continuous in the upper half space. We will use the arguments in \cite[Section 2]{DEK} (more precisely, \cite[Lemma 2.11]{DEK}) to estimate $\nabla \psi$.

Let $r \in (0, R/6)$ be fixed, and denote $\psi_r(Y) := \psi(rY)$ in $B_{1}^+(0)$. Then it satisfies the rescaled equation
\[ \left\{\begin{array}{ll}
	-\divg(A_r(\cdot) \psi_r) = \divg \vec{g}_r, & \text{ in } B_{2}^+(0) \\
	\psi_r \equiv 0, & \text{ on } B_2(0) \cap \partial \RR^d_+
\end{array} \right. \]
where we denote
\begin{equation}
	A_r(Y) := A(rY), \quad \vec{g}_r(Y) := r\vec{g}(rY).
\end{equation}
For each $Y\in B_{1}^+(0)$ and $0<t\leq 2$, we denote
\[ \omega_{A_r}(t) := \sup_{Y, Y' \in B_2^+(0) \atop{|Y'-Y| \leq t} } 
|A_r(Y') - A_r(Y)| = \sup_{Z', Z \in B_{2r}^+(0) \atop{|Z'-Z|\leq tr} } \left| A(Z') - A(Z) \right| \]
and
\[ 
	\omega_{\vec{g}_r}(t) := \sup_{Y, Y' \in B_2^+(0) \atop{|Y'-Y|\leq t } } \left|\vec{g}_r(Y) - \vec{g}_r(Y') \right| = r \sup_{Y, Y' \in B_{2}^+(0) \atop{|Y'-Y| \leq t} } \left|\vec{g}(rY') - \vec{g}(rY) \right|.
\]
Since the modulus of continuity of $A(\cdot)$ is bounded above by $\theta(2\cdot)$, it follows that
\begin{equation}\label{eq:oscAr}
	\omega_{A_r}(t) \lesssim \theta(2tr). 
\end{equation} 
On the other hand, since $P_N$ is a homogeneous harmonic polynomial of degree $N$, its derivative of any order is uniformly bounded in $B_2^+(0)$ by a constant multiple of $\|P_N\|_{L^\infty(B_1^+(0))}$. Moreover,
\begin{align*}
	\left|\vec{g}(rY') - \vec{g}(rY) \right| & \leq \left|(A(rY')-\Id) (\nabla P_N(rY') - \nabla P_N(rY)) \right| + \left|(A(rY') - A(rY)) \nabla P_N(rY) \right| \\
	& \lesssim \theta(2r|Y'|) \cdot r^{N-1} \left| \nabla P_N(Y') - \nabla P_N(Y) \right| + \theta(2r|Y'-Y|) \cdot r^{N-1} |\nabla P_N(Y)| \\
	& \lesssim r^{N-1} \theta(4r) \cdot |Y'-Y| + r^{N-1} \cdot \theta(2r|Y'-Y|),
\end{align*}
where the constant depends on $\|P_N\|_{L^\infty(B_1^+(0))}$.
Hence
\begin{equation}\label{eq:omegagrt}
	\omega_{\vec{g}_r}(t) \lesssim  r^N \theta(4r) \cdot t + r^N \cdot \theta(2tr).
\end{equation} 
In particular $\omega_{\vec{g}_r}(\cdot)$ is Dini continuous. Therefore \cite[Lemma 2.11]{DEK} implies that for any $Y \in B_1^+(0)$,
\begin{equation}\label{eq:gradpsir}
	|\nabla \psi_r(Y)| \lesssim \|\nabla \psi_r\|_{L^1(B_2^+(0))} + \int_0^{\frac12} \frac{\hat{\omega}_{\vec{g}_r}( t)}{t} dt, 
\end{equation} 
where the constant depends on $d$, the ellipticity constants and $\omega_{A_r}$, which we have shown in \eqref{eq:oscAr} to be uniformly bounded. Moreover, following the notation in \cite{DEK}, $\hat{\omega}_{\bullet}(t)$ is determined by $\omega_{\bullet}(t)$ as follows: let $\beta \in (0,1)$, we define
\begin{equation}\label{def:omegahat}
	\hat{\omega}_{\bullet}(t) := \omega_{\bullet}(t) + \omega_{\bullet}(4t) + \omega_{\bullet}^{\sharp}(4t),\footnote{In \cite{DEK} they need the additional parameter $\tilde{\omega}_{\bullet}(\cdot)$ because they work with Dini continuous functions in the average sense, i.e. functions with \textit{Dini-mean oscillation}. When one works with uniform Dini function, which is our case here, $\tilde{\omega}_{\bullet}(\cdot)$ can be simply taken the same as $\omega_{\bullet}(\cdot)$.  }
\end{equation}
with
\begin{equation}\label{def:omegasharp}
	\omega^{\sharp}_{\bullet}(t) := \sup_{s\in [t, 1]} \left(\frac{t}{s} \right)^\beta \omega_{\bullet}(s). 
\end{equation} 
It is also proven in \cite{DEK} that if $\omega_\bullet(\cdot)$ satisfies \eqref{def:Dini} and is doubling (i.e. \eqref{Dini:doubling}), then $\omega_\bullet^\sharp(\cdot)$ also verifies \eqref{def:Dini}.
By the above definitions \eqref{def:omegahat} and \eqref{def:omegasharp}, it is not hard to see if $\omega(t) \leq \lambda_1 \omega_1(t) + \lambda_2 \omega_2(t)$, then $\hat{\omega}(t) \leq \lambda_1 \hat{\omega}_1(t) + \lambda_2 \hat{\omega}_2(t)$. Besides, when $\omega_\bullet(t)$ is taken to be $ \theta(2tr)$, we have that
\[ 
\omega^\sharp_\bullet(t):= \sup_{s\in [t,1]} \left( \frac{t}{s} \right)^\beta \theta(2rs) = \sup_{s' \in [2tr, 2r]} \left( \frac{2tr}{s'} \right)^\beta \theta(s') \leq \sup_{s' \in [2tr, R]} \left( \frac{2tr}{s'} \right)^\beta \theta(s') = \theta^\sharp(2tr), \]
where, as in \eqref{def:omegasharp}, we define
\begin{equation}\label{def:thetasharp}
	\theta^\sharp(t) := \sup_{ s\in [t, R]} \left( \frac{t}{s} \right)^\beta \theta(s) 
\end{equation}
Hence
\[ \hat{\omega}_\bullet(t) = \theta(2tr) + \theta(8tr) + \omega^\sharp(4t) \leq 2\theta(8tr) + \theta^\sharp(8tr). \]
When $\omega_\bullet(t)$ is taken to be $t$, we have that
\[ \hat{\omega}_\bullet(t) = t + 4t + \omega^\sharp_\bullet(4t) \lesssim t^\beta. \]
Therefore \eqref{eq:omegagrt} implies that
\begin{align*}
	\hat{\omega}_{\vec{g}_r}(t) \lesssim r^N \theta(4r) \cdot t^\beta + r^N \cdot \left[ \theta(8tr) + \theta^\sharp(8tr) \right],
\end{align*}
and thus
\begin{equation}\label{tmp:1}
	\int_0^{\frac12} \frac{\hat{\omega}_{\vec{g}_r}(t)}{t} dt \lesssim r^N \theta(4r) + r^N \cdot \left[ \int_0^{4r} \frac{\theta(s)}{s} ds + \int_0^{4r} \frac{\theta^\sharp(s)}{s} ds \right].  
\end{equation} 
%

On the other hand, by H\"older's inequality and the energy estimate with vanishing boundary data (see, for example, \cite[Lemma 1.41]{CK}), we have
\begin{align}
	\iint_{B_{2}^+(0)} |\nabla \psi_r| dY  \lesssim \left( \iint_{B_{2}^+(0)} |\nabla \psi_r|^2 dY \right)^{1/2} & \lesssim \left( \iint_{B_{3}^+(0)} |\psi_r|^2 dY + \iint_{B_{3}^+(0)} |\vec{g}_r|^2 dY \right)^{1/2} \nonumber \\
	& \lesssim \sup_{B_{3r}^+(0)} |\psi| + r \cdot \sup_{B_{3r}^+(0)} |\vec{g}| \nonumber \\
	& \lesssim r^N \tilde{\theta}(3r), \label{tmp:2}
\end{align}
where we recall $\tilde{\theta}(\cdot)$ is defined in \eqref{def:errordecay}.
Inserting \eqref{tmp:1} and \eqref{tmp:2} back into \eqref{eq:gradpsir}, we obtain,
\begin{align*}
	|\nabla\psi_r(Y)| & \lesssim \iint_{B_2^+(0)} |\nabla \psi_r| dY  + \int_0^{\frac12} \frac{\hat{\omega}_{\vec{g}_r}(t)}{t} dt \\
	& \lesssim r^N \tilde{\theta}(3r) + r^N \theta(4r) + r^N \cdot \left[ \int_0^{4r} \frac{\theta(s)}{s} ds + \int_0^{4r} \frac{\theta^\sharp(s)}{s} ds \right].
\end{align*}
Or equivalently,
\[ |\nabla \psi(rY)| \lesssim r^{N-1} \cdot \left[ \tilde{\theta}(3r) + \theta(4r) + \int_0^{4r} \frac{\theta(s)}{s} ds + \int_0^{4r} \frac{\theta^\sharp(s)}{s} ds \right]. \]
Finally, let
\begin{equation}\label{def:thetaring}
	\mathring{\theta}(r):= \tilde{\theta}(3r) + \theta(4r) + \int_0^{4r} \frac{\theta(s)}{s} ds + \int_0^{4r} \frac{\theta^\sharp(s)}{s} ds, 
\end{equation} 
where we recall 
$\theta^\sharp(\cdot)$ is defined in \eqref{def:thetasharp} and it verifies the Dini condition \eqref{cond:Dini}.
We conclude that
\begin{equation}\label{eq:gradpsi}
	|\nabla \psi(Y)| \leq C|Y|^{N-1} \mathring{\theta}(|Y|) \quad \text{ for any } Y \in B_{R/6}^+(0),
\end{equation} 
where
\[ \mathring{\theta}(r) \to 0 \text{ as } r \to 0. \]
We remark that exactly the same proof as above yields the gradient estimate of $\nabla \psi$ on the lower half space. Moreover $\nabla \psi = \nabla v - \nabla P_N$ is continuous up to the boundary from above and below, by \cite[Proposition 2.7]{DEK}. Therefore \eqref{eq:gradpsi} holds in the entire ball $B_{R/6}(0)$.

\section{Proof of Theorem \ref{thm:uexp} and Corollary \ref{cor:uniqtang}}\label{sec:uexp}
Now we are ready to prove the expansion of $u$ by the expansion \eqref{eq:vexp} for $v$ which is proven in the previous section.
By the definition of $v$ in Section \ref{sec:flat}, we have
\begin{equation}\label{tmp:uexp}
	u(x,t) = v(x, t-\varphi(x)) = P_N(x, t-\varphi(x)) + \psi(x, t-\varphi(x)). 
\end{equation} 
Let $r=|(x,t)|$, then $|\varphi(x)| \leq \theta(2r)r$. Hence for $r$ sufficiently small, we have
\[ \frac{r}{2} < |(x, t-\varphi(x))| < \frac{3r}{2}. \]
By the error estimate \eqref{eq:error}, we have
\begin{equation}\label{tmp:error}
	|\psi(x, t-\varphi(x))| \leq C'C_N r^N \tilde{\theta}(2r). 
\end{equation} 
On the other hand
\begin{equation}\label{tmp:polyexp}
	P_N(t, x-\varphi(x)) = P_N(x,t) -\varphi(x) \cdot \int_0^1 \partial_d P_N(x, t-\tau \varphi(x)) d\tau.
\end{equation} 
By \eqref{eq:vexp}, \eqref{eq:vdecayN} and \eqref{eq:error}, we can estimate
\begin{align*}
	\frac{1}{r^d} \iint_{B_{2r}(0)} |P_N|^2 dX = \frac{1}{r^d} \iint_{B_{2r}(0)} |v-\psi|^2 dX & \lesssim \frac{1}{r^d} \iint_{B_{2r}(0)} v^2 dX + \frac{1}{r^d} \iint_{B_{2r}(0)} \psi^2 dX \\
	& \lesssim C_N^2 r^{2N} + C_N^2 r^{2N} \tilde{\theta}(2r)^2 \\
	& \lesssim C_N^2 r^{2N},
\end{align*}
with a uniform constant (which only depends on the dimension $d$ and the ellipticity). (The $r^{2N}$-decay clearly just follows from the homogeneity of $P_N$. But here we want to emphasize how the constant in front depends on the constant $C_N$ from \eqref{as:vNorder}.)
Since $P_N$ is a harmonic function in $\RR^d$, we have
\begin{equation}
	\sup_{B_{\frac{3r}{2}}(0)} |\nabla P_N| \lesssim \frac{1}{r} \left( \frac{1}{r^d} \iint_{B_{2r}(0)} |P_N|^2 dX \right)^{1/2} \lesssim C_N r^{N-1}.
\end{equation}
Moreover, 
\begin{equation}\label{tmp:graderror2}
	\|\nabla^2 P_N\|_{L^\infty(B_r)} \lesssim \frac{1}{r^2} \|P_N\|_{L^\infty(B_{2r})} \lesssim C_N r^{N-2}.
\end{equation}
Therefore
\[ 
	\left|\varphi(x) \cdot \int_0^1 \partial_d P_N(x, t-\tau \varphi(x)) d\tau \right| \leq \sup_{B_{\frac{3r}{2}}(0)} |\nabla P_N| \cdot |\varphi(x)| \lesssim C_N r^{N-1} \cdot r \theta(2r) = C_N r^N \theta(2r). \]
Combined with \eqref{tmp:uexp}, \eqref{tmp:error} and \eqref{tmp:polyexp}, we conclude that in $B_{R/3}(0)$, $u$ has the expansion
\begin{equation}\label{tmp:uexp2}
	u(x, t) = P_N(x,t) + \tilde{\psi}(x,t), 
\end{equation} 
where the error term
\[ \tilde{\psi}(x,t) = \psi(x, t-\varphi(x)) - \varphi(x) \cdot \int_0^1 \partial_d P_N(x,t-\tau \varphi(x)) d\tau \] 
satisfies
\[ |\tilde{\psi}(x,t) | \leq C C_N |(x,t)|^N \tilde{\theta}(2|(x,t)|). 
\]
(For our purpose, the expansion \eqref{tmp:uexp2} is meaningful only inside $B_{R/3}(0) \cap D$, i.e. when $t> \varphi(x)$, but the expansion holds in the entire ball if we consider an extension of $u$ across the boundary by the odd reflection of $v$ in \eqref{def:oddrefl} and the transformation as in \eqref{tmp:uexp}.)
Moreover, by the gradient estimates in \eqref{eq:gradpsi} and \eqref{tmp:graderror2}, we have
\[ \left|\nabla \tilde{\psi}(x,t) \right| \leq C C_N |(x,t)|^{N-1} \mathring{\theta}(2|(x,t)| ). \]

Recall that for any $X_0 = (x_0, \varphi(x_0)) \in \pD$, we can apply a translation and orthogonal transformation $O_{x_0}$ as in Section \ref{sec:ot} so that $X_0$ becomes the origin and the tangent plane to $\pD$ at $X_0$ is flat (i.e. $\nabla \varphi(x_0) = 0$). Taking into account the orthogonal transformation, we in fact get
\begin{align*}
	u(x,t) & = P_N\left(O_{x_0} \left( (x,t) - X_0 \right) \right) + \tilde{\psi} \left(O_{x_0} \left( (x,t) - X_0 \right) \right) \\
	& = \tilde{P}_N ((x,t) - X_0) + \tilde{\tilde \psi}((x,t)-X_0),
\end{align*} 
where $\tilde{P}_N$ is still a non-trivial homogeneous harmonic polynomial of degree $N= N_{X_0}$. For simplicity we still denote it as $P_N$, and simply write 
\begin{equation}\label{eq:uexp}
	u(x,t) = P_N ((x,t) - X_0) + \tilde{\psi}((x,t) - X_0), \quad \text{ in } B_{R/3}(X_0) \cap D. 
\end{equation} 

In order to prove the \textit{uniqueness of the expansion}, we assume that $u$ has two such expansions
\[ u(X) = P_N(X-X_0) + \tilde{\psi}(X-X_0) \]
and
\[ u(X) = P'_N(X-X_0) + \tilde{\psi}'(X-X_0), \]
such that
\begin{equation}\label{tmp:uniqerror}
	|\tilde{\psi}(Y)| \leq C_1 |Y|^N \tilde{\theta}(|Y|), \quad |\tilde{\psi}'(Y)| \leq C_2 |Y|^N \tilde{\theta}(|Y|). 
\end{equation} 
Notice that the degree of the homogeneous harmonic polynomial is uniquely determined by $N_{X_0}$.
It follows that
\[ P_N(Y) - P'_N(Y) = \tilde{\psi}'(Y) - \tilde{\psi}(Y) \quad \text{ for } Y \in B_{R/3}(0). \]
Let $\tilde{P}_N := P_N - P'_N$. Then it is also a homogeneous harmonic polynomial of degree $N$. Assuming that $\tilde{P}_N\not\equiv 0$, then there exists a unit vector $\vec{e} \in \mathbb{S}^{d-1}$ such that $\tilde{P}_N(e) \neq 0$. In particular $\tilde{P}_N(r\vec{e}) = r^N \tilde{P}_N(e) \neq 0$. On the other hand by the estimates \eqref{tmp:uniqerror}, we have
\begin{align*}
	\left| \tilde{P}_N(r\vec{e}) \right| = \left|\tilde{\psi}'(r\vec{e}) - \tilde{\psi}(r\vec{e})  \right| \leq (C_1 + C_2) r^N \tilde{\theta}(r).
\end{align*}
Hence it follows that
\[ \left| \tilde{P}_N(\vec{e}) \right| \leq (C_1 + C_2) \tilde{\theta}(r) \to 0 \quad \text{ as } r\to 0, \]
which contradicts the assumption that $\tilde{P}_N(\vec{e}) \neq 0$. Therefore it must be the case that $\tilde{P}_N \equiv 0$. As a result $P_N \equiv P'_N$ and $\tilde{\psi} \equiv \tilde{\psi}'$, i.e. the expansion is unique.
%
This finishes the proof of Theorem \ref{thm:uexp}.


Now we set out to prove Corollary \ref{cor:uniqtang}, or more precisely, prove \eqref{cl:cvrate}.
Denote $X_0 = (x_0, \varphi(x_0))$. We recall that $T_{X_0, r} u$ is defined in the domain $\frac{D-X_0}{r}$, which is the region above the graph of the function
\[ \varphi_r: y\in \RR^{d-1} \mapsto \frac{\varphi(x_0 + ry) - \varphi(x_0)}{r}. \]
Assuming without loss of generality that $\nabla \varphi(x_0) = 0$, we have that $\frac{D-X_0}{r}$ converges to the upper half space $\RR^d_+$.
Moreover, the Lebesgue measure of the set difference between $\frac{D-X_0}{r}$ and $\RR^d_+$ can be estimated as
\begin{equation}\label{eq:setdiff}
	\left| B_1(0) \cap \left( \frac{D-X_0}{r} \Delta ~\RR^d_+ \right) \right| \leq \int_{B_1^{d-1}(0)} |\varphi_r(y)| dy \lesssim \sup_{B_r^{d-1}(x_0)} |\nabla \varphi - \nabla \varphi(x_0) | \leq \theta(r). 
\end{equation}
 Since $P_N$ is homogeneous of degree $N$, we have
\begin{equation}\label{eq:homPN}
	\frac{1}{r^d} \iint_{B_r^+(0)} |P_N|^2 ~dY = \iint_{B_1^+(0)} |P_N(rZ)|^2 ~dZ = r^{2N} \cdot \iint_{B_1^+(0)} |P_N|^2 ~dZ.  
\end{equation} 
Combined with the estimate of $\tilde{\psi}$, we have
\begin{equation}\label{tmp:tf2}
	\frac{1}{r^d} \iint_{B_r^+(0)} \left| P_N(Y) + \tilde{\psi}(Y) \right|^2 ~dY = \frac{1}{r^d} \iint_{B_r^+(0)} |P_N|^2 ~dY  + O \left(r^{2N} \tilde{\theta}(r)  \right).
\end{equation}
By a change of variable,  the pointwise bounds of $P_N, \tilde{\psi}$ and the estimate \eqref{eq:setdiff}, we have
\begin{align}
	& \left|\frac{1}{r^d} \iint_{B_r(0) \cap (D-X_0)} \left| P_N(Y) + \tilde{\psi}(Y) \right|^2 ~dY - \frac{1}{r^d} \iint_{B_r^+(0)} \left| P_N(Y) + \tilde{\psi}(Y) \right|^2 ~dY\right| \nonumber \\
	& \qquad \leq \iint_{B_1(0) \cap \left(\frac{D-X_0}{r} \Delta \, \RR^d_+ \right) }\left| P_N(rZ) + \tilde{\psi}(rZ) \right|^2 dZ \nonumber \\
	& \qquad \leq \sup_{B_r(0)} \left(|P_N| + | \tilde{\psi}| \right)^2 \cdot \left| B_1(0) \cap \left( \frac{D-X_0}{r} \Delta \,\RR^d_+ \right) \right| \nonumber \\
	& \qquad \lesssim r^{2N} \theta(r).\label{tmp:tf1}
\end{align}
Therefore by combining \eqref{tmp:tf1}, \eqref{tmp:tf2} and \eqref{eq:homPN}, we conclude
\begin{align*}
	\frac{1}{r^d} \iint_{B_r(X_0) \cap D} u^2 ~dY 
	& = \frac{1}{r^d} \iint_{B_r(0) \cap (D-X_0)} \left| P_N(Y) + \tilde{\psi}(Y) \right|^2 ~dY \\
	& = \frac{1}{r^d} \iint_{B_r^+(0)} |P_N + \tilde{\psi}|^2 dY + O(r^{2N} \theta(r)) \\
	& = \frac{1}{r^d} \iint_{B_r^+(0)} |P_N|^2 ~dY  + O \left(r^{2N} \tilde{\theta}(r) \right) \\
	& =\frac{1}{r^d} \iint_{B_r^+(0)} |P_N|^2 ~dY \cdot \left( 1+ O(\tilde{\theta}(r)) \right).
\end{align*} 
Hence
\begin{align*}
	T_{X_0, r}u (Z) & = \frac{u(X_0 + rZ)}{\left(\frac{1}{r^d} \iint_{B_r^+(0)} |P_N|^2 ~dY \right)^{\frac12} \cdot (1+O(\tilde{\theta}(r)))^{\frac12} } \\
	& = \frac{P_N(rZ) + \tilde{\psi}(rZ)}{\left(\frac{1}{r^d} \iint_{B_r^+(0)} |P_N|^2 ~dY \right)^{\frac12} } \left( 1+ O(\tilde{\theta}(r)) \right) \\
	& = \left[ \frac{P_N(rZ)}{\left(\frac{1}{r^d} \iint_{B_r^+(0)} |P_N|^2 ~dY \right)^{\frac12} } +\tilde{\psi}(rZ)  \cdot O \left( \frac{1}{r^N} \right) \right] \left( 1+ O(\tilde{\theta}(r)) \right) \\
	& = c P_N(Z) + O(\tilde{\theta}(r)),
\end{align*}
where 
\[ c = \left( \iint_{B_1^+(0)} |P_N|^2 dZ \right)^{-\frac12}. \]
This finishes the proof of the claim \eqref{cl:cvrate}.

\section{Proof of Proposition \ref{prop:unif}}\label{sec:contang}
%

We denote $X_j = (x_j, \varphi(x_j))$ for each $j\in \mathbb{N}_0$. Recall that in Section \ref{sec:ot} we find an orthogonal transformation $O_{x_j}$, which locally maps the domain $D-X_j$ to a domain $D_{x_j}$, defined as the region above the graph of a function $\tilde{\varphi}_{x_j}$. Under this transformation, the harmonic function $u$ in $D$ becomes a harmonic function $\tilde{u}$ in $D_{x_j}$: for any $Y \in D_{x_j}$ sufficiently close to the origin, we have
\begin{equation}\label{tmp:ot}
	\tilde{u} (Y) := u(X_j + O_{x_j}^T Y). 
\end{equation} 
Recall that in Section \ref{sec:flat}, we were able to study the harmonic function $\tilde{u}$ using the flattening map
\[ \Phi_{x_j}: (y,s) \in \RR^d_+ \mapsto (y, s+ \tilde{\varphi}_{x_j}(y)) \in D_{x_j} \]
and
\begin{equation}\label{tmp:flat}
	v(y,s) = \tilde{u} \circ \Phi_{x_j}(y,s). 
\end{equation} 
Combining \eqref{tmp:ot} and \eqref{tmp:flat}, we get a function $v_j: \RR^d_+ \to \RR$ defined as
\begin{equation}\label{def:vj}
	v_j(y,s) = \tilde{u}(y, s+ \tilde{\varphi}_{x_j}(y)) = u\left( X_j + O_{x_j}^T(y,s+\tilde{\varphi}_{x_j}(y)) \right). 
\end{equation} 
To study how the functions $v_j$'s are related, we need to study how the map $O_{x_j}$ and $\tilde{\varphi}_{x_j}$ depend on the sub-index $x_j$.

Recall that for any $(x, \varphi(x)) \in \pD$, the orthogonal matrix $O_{x}$ is explicitly determined by $\nabla \varphi(x)$, as in \eqref{eq:Oform}, where $c_{x}$ satisfies $c_{x} = (1+|\nabla \varphi(x)|^2)^{-\frac{1}{2}}$ and the block matrix $\tilde{O}_{x}$ is symmetric and satisfies $\tilde{O}_{x}^{-1}$ is the square root of $\Id_{d-1} + \nabla \varphi(x) \nabla \varphi(x)^T$.
Hence
\begin{equation}\label{eq:cvar}
	|c_{x} - c_{x'}| \lesssim \left|| \nabla \varphi(x) | - |\nabla \varphi(x')| \right| \leq \left| \nabla \varphi(x) - \nabla \varphi(x') \right| \leq \theta(|x-x'|); 
\end{equation} 
and the block matrices $\tilde{O}_x$ and $\tilde{O}_{x'}$ satisfy
\begin{align*}
	(\tilde{O}_x - \tilde{O}_{x'})(\tilde{O}_x + \tilde{O}_{x'}) = (\tilde{O}_x)^2 - (\tilde{O}_{x'})^2 &= (\tilde{O}_{x'})^2 \left[ \left((\tilde{O}_{x'})^{-1}\right)^2 -  \left((\tilde{O}_{x})^{-1} \right)^2 \right] \tilde{O}^2 \\
	&= (\tilde{O}_{x'})^2 \left[ \nabla \varphi(x') \nabla \varphi(x')^T - \nabla \varphi(x) \nabla \varphi(x)^T \right] \tilde{O}^2.
\end{align*}
Since the eigenvalues of $\tilde{O}_x, \tilde{O}_{x'}$ are bounded above and below, it follows that
\begin{equation}\label{eq:Otildevar}
	|\tilde{O}_x - \tilde{O}_{x'}| \lesssim \left| \nabla \varphi(x') \nabla \varphi(x')^T - \nabla \varphi(x) \nabla \varphi(x)^T\right| \lesssim |\nabla \varphi(x') - \nabla \varphi(x)| \leq \theta(|x-x'|). 
\end{equation} 
Combining \eqref{eq:Oform}, \eqref{eq:cvar} and \eqref{eq:Otildevar}, we get
\begin{equation}\label{eq:Ovar}
	\left| O_x - O_{x'} \right| \lesssim \theta(|x-x'|). 
\end{equation} 

On the other hand, the map $\tilde{\varphi}$ is defined as in \eqref{def:varphitilde}, where the function $g$ is defined as in \eqref{def:g}: that is, for any $(x,\varphi(x)) \in \pD$
\[ g_x: z \in \RR^{d-1} \mapsto \tilde{O}_x (z-x) - (\varphi(z) - \varphi(x)) \tilde{O}_x \nabla \varphi(x) = y \in \RR^{d-1}. \]
It follows that
\begin{align*}
	g_x(z) - g_{x'}(z) &= \tilde{O}_x \left[ (x'-x) + \varphi(z) \left( \nabla \varphi(x) - \nabla \varphi(x') \right) + \left( \varphi(x') - \varphi(x) \right) \nabla \varphi(x) \right. \\
	& \quad + \left. \varphi(x) \left( \nabla \varphi(x') - \nabla \varphi(x) \right) \right] - \left( \tilde{O}_{x'} - \tilde{O}_x \right) \left[ (z-x') + \left(\varphi(z) - \varphi(x') \right) \nabla \varphi(x') \right]
\end{align*}
Hence by \eqref{eq:Otildevar}, we get
\[ \| g_x - g_{x'} \|_{L^\infty(B_1^{d-1}(0))} \lesssim \theta(|x-x'|). \]
Similarly by \eqref{eq:Dg}, we obtain
\[ \|Dg_x - Dg_{x'}\|_{L^\infty(B_1^{d-1}(0))} \lesssim |\tilde{O}_x - \tilde{O}_{x'}| + |\tilde{O}_x \nabla \varphi(x) - \tilde{O}_{x'} \nabla \varphi(x')| \lesssim \theta(|x-x'|). \]
In the same fashion (and using \eqref{tmp:xymoc}), we conclude that
\begin{equation}\label{eq:varphitildevar}
	\|\tilde{\varphi}_x - \tilde{\varphi}_{x'} \|_{L^\infty(B_{1/2}^{d-1}(0))} \lesssim \theta(|x-x'| ), \quad \|\nabla \tilde{\varphi}_x - \nabla \tilde{\varphi}_{x'} \|_{L^\infty(B_{1/2}^{d-1}(0))} \lesssim \theta(|x-x'| ). 
\end{equation} 

Recall that $u$ is continuously differentiable near the boundary of the Dini domain (by the work of \cite{DEK}). Therefore combining \eqref{def:vj}, \eqref{eq:Ovar}, \eqref{eq:varphitildevar} and $X_j \to X_0$, we conclude that $v_j \to v_0$ (locally uniformly) in $C^1$-topology.

Let $N= N_{X_0} = N_{X_j} \in \mathbb{N}$.
By Section \ref{sec:vexp}, each $v_j$ has the expansion
\[ v_j(Y) = P_j(Y) + \psi_j(Y) \]
in some ball $B_{R_j}(0)$, where $P_j$ is a non-trivial homogeneous harmonic polynomial of degree $N$, and the error term $\psi_j$ satisfies $ |\psi_j(Y)| \leq C_j |Y|^N \tilde{\theta}(|Y|)$. By the proof in Section \ref{sec:uexp}, it suffices to show that $P_j$ converges to $P_0$ in the $C^N$-topology. By the definitions of $w_j$ and $P_{j,2}$ in \eqref{def:w} and \eqref{def:P2}, respectively, and $\nabla v_j \to \nabla v_0$ locally uniformly\footnote{In fact, it suffices to know that $\nabla v_j \rightharpoonup \nabla v_0$ weakly in $L^p$ for some $p>d$.}, we get that 
\[ w_j \to w_0, \quad P_{j,2} \to P_{0,2} \]
uniformly. On the other hand, since $v_j \to v_0$ uniformly, the harmonic functions $v_j - w_j$ also converge uniformly to $v_0 - w_0$. By the expansions of these harmonic functions to degree $N$ as in \eqref{eq:P1exp}, the polynomials $P_{j,1}$ also converge uniformly to $P_{0,1}$. Thus 
\[ P_j = P_{j,1} + P_{j,2} \to P_{0,1} + P_{0,2} = P_0 \]
locally uniformly. Since $P_j, P_0$ are homogeneous harmonic polynomials of the same degree $N$, they also converge in $C^N$-topology. This finishes the proof of Proposition \ref{prop:unif}.

\appendix
\renewcommand{\theequation}{A.\arabic{equation}}
\section*{Appendix. Proof of upper semi-continuity of the vanishing order}
The goal of this appendix is to prove the upper semi-continuity of the vanishing order.
\begin{lemma}\label{lm:usc}
	Let $D$ and $u$ be as in Theorem \ref{thm:uexp}.
	The map
	\[ X \in \partial D \cap B_{R_0}(0) \mapsto N_{X} \in \mathbb{N} \]
	is upper semi-continuous. That is, 
	\[ \limsup_{X \in \partial D \cap B_{R_0}(0) \atop{ X \to X_0} } N_{X} \leq N_{X_0}. \]
\end{lemma}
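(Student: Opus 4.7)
The plan is to follow the standard three-step recipe for upper semi-continuity of the Almgren frequency, adapted to the modified frequency function $\widetilde N(X,\cdot) = N_X(\cdot)$ defined in \cite{KZ}. Throughout, recall two facts established in \cite{KZ}: for every boundary point $X \in \pD \cap B_{R_0}(0)$, the map $r \mapsto N_X(r)$ is monotone non-decreasing, and the limit $N_X = \lim_{r \to 0} N_X(r)$ is a non-negative integer.

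\textbf{Step 1: reduction via monotonicity.} Fix $X_0 \in \pD \cap B_{R_0}(0)$ and $\epsilon \in (0, 1/4)$. By the monotonicity of $r \mapsto N_{X_0}(r)$ and the definition of $N_{X_0}$, choose $r = r(X_0, \epsilon) > 0$ small enough (and still admissible for the theory in \cite{KZ}) so that
\[
N_{X_0} \;\leq\; N_{X_0}(r) \;\leq\; N_{X_0} + \epsilon.
\]
Once continuity of $X \mapsto N_X(r)$ at $X_0$ has been established (Step 2 below), there exists $\delta > 0$ such that for all $X \in \pD \cap B_\delta(X_0)$,
\[
N_X(r) \;\leq\; N_{X_0}(r) + \epsilon \;\leq\; N_{X_0} + 2\epsilon,
\]
and then monotonicity again gives $N_X \leq N_X(r) \leq N_{X_0} + 2\epsilon$. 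Since $N_X \in \mathbb{N}$ and $2\epsilon < 1$, this forces $N_X \leq N_{X_0}$, which is exactly the upper semi-continuity claim.

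\textbf{Step 2: continuity of $X \mapsto N_X(r)$ for fixed $r$.} This is the crux of the argument. Recall that $N_X(r)$ is defined in \cite[Sections 3--4]{KZ} in terms of the pulled-back function $u \circ \Psi_X$, where $\Psi_X$ is the transformation in \eqref{def:Psi}; the frequency is a ratio of a weighted Dirichlet-type integral over $B_r(0)$ against a weighted boundary $L^2$-type quantity $H(u \circ \Psi_X, r)$. The map $X \mapsto \Psi_X$ is continuous in the $C^1$-norm on compact sets (directly from the formula \eqref{def:Psi}), and $u \in C^1(\overline{D \cap B_{4R_0}(0)})$ by \cite{DEK}. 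Consequently, for a sequence $X_j \to X_0$ in $\pD \cap B_{R_0}(0)$, the pulled-back functions $u \circ \Psi_{X_j}$ converge to $u \circ \Psi_{X_0}$ uniformly together with their gradients on $\overline{B_r(0)}$. Both the numerator and the denominator in the definition of $N_X(r)$ therefore converge; the only issue is that the denominator $H(u \circ \Psi_{X_0}, r)$ must be nonzero, which it is, since $u \not\equiv 0$ and its $L^2$-mass on $B_r(X_0) \cap D$ is positive by Lemma \ref{lm:doublingL2u} (combined with the comparison between $L^2$-integrals over $B_r(X_0) \cap D$ and over the image $\Psi_{X_0}(B_r(0))$ used in the proof of Lemma \ref{cor:doublingL2v}). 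Hence $N_{X_j}(r) \to N_{X_0}(r)$, proving the desired continuity.

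\textbf{Main obstacle.} The serious work is in Step 2, specifically in verifying that for fixed $r$ the full expression defining $N_X(r)$, which involves the $X$-dependent coefficient matrix induced by $\Psi_X$ and both interior and boundary integrals, is genuinely continuous in $X$. Once this is granted, Step 1 closes the argument in one line via monotonicity and integrality of $N_X$. All the technical ingredients, continuity of $\Psi_X$ in $X$, $C^1$-regularity of $u$ up to the boundary, and positivity of the $L^2$-mass of $u$ in balls centered at boundary points, are already established either in the present paper or in \cite{KZ, DEK}.
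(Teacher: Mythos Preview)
Your argument is correct and follows the same overall outline as the paper's proof: reduce via monotonicity to continuity of $X\mapsto N_X(r)$ at a fixed small scale, then use integrality of the limit to conclude. The difference lies in how the continuity step is carried out. The paper does not work directly with the pulled-back functions $u\circ\Psi_X$; instead it proves an auxiliary lemma (Lemma~\ref{lm:starshaped}) showing that, up to a factor $1+O(\theta(4r))$, the modified frequency $N(u\circ\Psi_X,r)$ equals the \emph{standard} Almgren frequency $N(u, X+3r\hat\theta(r)e_d, r)$ centered at a point slightly pushed into $D$. Continuity in $X$ is then obtained from continuity of the classical frequency $\hat X\mapsto N(u,\hat X,r)$, which only needs $u\in W^{1,2}$. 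Your route is more direct---you use that $\Psi_X$ is a translate of a fixed map, so the induced metric and weights are actually $X$-independent (your remark about an ``$X$-dependent coefficient matrix'' is thus unnecessary), and then invoke $C^1$ regularity of $u$ up to the boundary to pass to the limit in both integrals. One small point to tighten: your claim that the gradients of $u\circ\Psi_{X_j}$ converge \emph{uniformly} on $\overline{B_r(0)}$ is not literally true once $u$ is extended by zero across $\partial D$ (the gradient jumps there); but pointwise a.e.\ convergence with a uniform $L^\infty$ bound suffices for the Dirichlet integral by dominated convergence, which is all you need. The paper's detour through Lemma~\ref{lm:starshaped} avoids this issue entirely and gives the additional geometric interpretation (star-shapedness near the boundary) noted in \cite{KN}, at the cost of a short extra computation.
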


Recall that in \cite[Section 4]{KZ}, we define the modified frequency functions at different boundary points by applying different transformation maps. To compare them, we need to understand what the modified frequency function at each boundary point means in the original domain $D$.
\begin{lemma}\label{lm:starshaped}
	Let $D$ and $u$ be as in Theorem \ref{thm:uexp}.
	For any $X\in \pD \cap B_{2R_0}(0)$ and $r>0$ small (so that $\theta(4r) < 1/26$), we have
	\begin{equation}\label{eq:starshaped}
		N(u \circ \Psi_X, r) = \left[1+O(\theta(4r)) \right] \cdot N(u, X+3r\hat{\theta}(r) e_d, r), 
	\end{equation} 
	where $\Psi_X$ and $\hat{\theta}$ are defined in \eqref{def:Psi} and \eqref{def:that}, respectively; $N(u, Y, r )$ denotes the standard Almgren's frequency function of $u$ centered at $Y \in D$ and at scale $r$, see \eqref{def:Nr}; and $N(u\circ \Psi_X, r)$ denotes the frequency function for an elliptic operator in the domain $\Psi_X^{-1}(D)$, see \cite[Section 3]{KZ}.
\end{lemma}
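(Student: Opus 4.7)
The plan is to exploit a special geometric feature of the transformation $\Psi_X$, namely that it maps spheres centered at the origin to spheres, and then to perform a change of variables identifying the modified frequency function with the standard Almgren frequency at the translated center $Y_0 := X + 3r\hat\theta(r) e_d$.

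The key geometric observation is that on each sphere $\partial B_\rho(0)$ the shift $(0, 3|Y|\hat\theta(|Y|))$ is constant in $|Y|=\rho$, so $\Psi_X$ restricts to the isometric translation $Y \mapsto X + Y + 3\rho\hat\theta(\rho) e_d$ onto $\partial B_\rho(X + 3\rho\hat\theta(\rho) e_d)$. In particular $\Psi_X(\partial B_r(0)) = \partial B_r(Y_0)$, and this restriction is an isometry. A short computation gives $D\Psi_X(Y) = \Id + e_d \otimes \nabla \eta(Y)$ with $\eta(Y) := 3|Y|\hat\theta(|Y|)$, and $|\nabla \eta(Y)| \lesssim \hat\theta(|Y|) + |Y|\hat\theta'(|Y|) \lesssim \theta(4r)$ on $B_r(0)$ (using the explicit form \eqref{def:that} of $\hat\theta$ to control the second term). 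Hence $D\Psi_X = \Id + O(\theta(4r))$ and $\det D\Psi_X = 1 + O(\theta(4r))$. For $\theta(4r) < 1/26$ the map $\Psi_X$ is a diffeomorphism on a neighborhood of $\overline{B_r(0)}$, so its image is a topological closed ball whose boundary is exactly $\partial B_r(Y_0)$; by the Jordan--Brouwer separation theorem,
\[ \Psi_X(B_r(0)) = B_r(Y_0) \]
as sets.

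With this set-theoretic identification in hand, I would express $N(u\circ \Psi_X, r) = r D/H$ in the notation of \cite[Sections 3--4]{KZ}, where $v := u \circ \Psi_X$ solves $-\divg(\mathscr A_X \nabla v) = 0$ with $\mathscr A_X := (\det D\Psi_X)(D\Psi_X)^{-1}(D\Psi_X)^{-T} = \Id + O(\theta(4r))$, and
\[ D = \iint_{B_r(0)} \langle \mathscr A_X \nabla v, \nabla v\rangle dY, \quad H = \int_{\partial B_r(0)} \mu_X \, v^2 \, d\mathcal H^{d-1}, \]
with boundary weight $\mu_X$ inherited from $\mathscr A_X$ on $\partial B_r(0)$ and satisfying $\mu_X = 1 + O(\theta(4r))$. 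Changing variables $Z = \Psi_X(Y)$: the standard divergence-form pull-back identity gives $D = \iint_{B_r(Y_0)} |\nabla u|^2 \, dZ$ \emph{exactly}, and since $\Psi_X|_{\partial B_r(0)}$ is an isometric translation onto $\partial B_r(Y_0)$, $\int_{\partial B_r(0)} v^2 \, d\mathcal H^{d-1} = \int_{\partial B_r(Y_0)} u^2 \, d\mathcal H^{d-1}$; the bound on $\mu_X$ then yields $H = (1+O(\theta(4r))) \int_{\partial B_r(Y_0)} u^2 \, d\mathcal H^{d-1}$. Dividing and recalling \eqref{def:Nr} gives \eqref{eq:starshaped}.

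The main obstacle is not the final calculation but rather careful bookkeeping against the conventions of \cite{KZ}: specifically, pinning down the exact form of the boundary weight $\mu_X$ (adapted there to the Dini setting in order to secure monotonicity of $N(u \circ \Psi_X, \cdot)$) and verifying that $|\mu_X - 1| \lesssim \theta(4r)$ really holds for that choice. Once $|\mathscr A_X - \Id|$ and $|\mu_X - 1|$ are both controlled by $\theta(4r)$ on the relevant sets, the geometric coincidence $\Psi_X(B_r(0)) = B_r(Y_0)$ reduces the comparison of numerators and denominators to an immediate multiplicative error.
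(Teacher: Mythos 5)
Your proposal is correct and follows essentially the same route as the paper: the identity rests on the observations that $\Psi_X$ translates each sphere $\partial B_\rho$ onto $\partial B_\rho(X+3\rho\hat\theta(\rho)e_d)$, that $\Psi_X(B_r)=B_r(X+3r\hat\theta(r)e_d)$, and that the weights in the \cite{KZ} definitions of $D$ and $H$ are $1+O(\theta(4r))$, with the Dirichlet energy transforming exactly. The only cosmetic difference is how the set identity $\Psi_X(B_r)=B_r(Y_0)$ is established — you invoke injectivity of the near-identity map plus Jordan--Brouwer, whereas the paper shows the translated spheres $\partial B_\rho(3\rho\hat\theta(\rho)e_d)$ are nested by checking that $\rho\mapsto -\rho+3\rho\hat\theta(\rho)$ is decreasing (which is exactly where the hypothesis $\theta(4r)<1/26$ enters); both arguments are valid.
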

\begin{remark}
	The formula \eqref{eq:starshaped} is related to an observation pointed out in \cite{KN}: the Dini domain is star-shaped near the boundary. To be more precise, let $X\in \pD$ and $r>0$ be sufficiently small. Then the domain $D \cap B_r(X)$ is star-shaped with respect to some $Y_r \in D$. (See the proof of \cite[Lemma 3.2]{KN}.)
\end{remark}
\begin{proof}
	Recall that in \cite[(3.8)]{KZ}, we define
	\[ D(u\circ \Psi_X, r) = \iint_{B_r \cap \Omega_X} \mu |\nabla_g (u\circ \Psi_X) |^2_g ~dV_g = \iint_{\Psi_{X}(B_r) \cap D} |\nabla u|^2 ~dZ =: \widehat{D}(X, r); \] 
	and
	\begin{align*}
		H(u\circ\Psi_X , r) = \int_{\partial B_r \cap \Omega_X} \mu (u\circ \Psi_X)^2 ~dV_{\partial B_r}&  = \int_{\partial B_r \cap \Omega_X} \tilde{\eta} (u\circ \Psi_X)^2 \sH \\
		& = \left( 1+ O(\theta(4r)) \right) \int_{\Psi_X(\partial B_r) \cap D} u^2 \sH \\
		& = \left( 1+ O(\theta(4r)) \right) \widehat{H}(X, r),
	\end{align*} 
	where we introduce the notation
	\[ \widehat{H}(X, r) := \int_{\Psi_X(\partial B_r) \cap D} u^2 \sH. 
	\]
	Let
	\[ \widehat{N}(X, r) := \frac{r\widehat{D}(X, r) }{\widehat{H}(X, r) }, \]
	then the frequency function satisfies
	\begin{equation}\label{eq:NvX}
		N(u\circ\Psi_X, r) = \frac{r D(u\circ\Psi_X, r)}{H(u\circ\Psi_X, r)} = \left( 1+O(\theta(4r)) \right) \frac{r\widehat{D}(X, r) }{\widehat{H}(X, r) } = \left( 1+O(\theta(4r)) \right) \widehat{N}(X,r).
	\end{equation}
	
	
	By the definition of $\Psi_X$ in \eqref{def:Psi}, it is clear that it can be written as $X + \Psi(\cdot)$ for a map $\Psi$ independent of $X\in \partial D$. Besides, we have
	\[ \Psi(\partial B_r) = \partial B_r + 3r\hat{\theta}(r) e_d = \partial B_r(3r\hat{\theta}(r)e_d). \]
	To understand what the set $\partial \Psi(B_r)$ is, we first study the set $\Psi(B_r)$. Clearly
	\[ \Psi(B_r) = \bigcup_{\rho\in [0,r)} \Psi(\partial B_\rho) = \bigcup_{\rho\in [0,r)}  \partial B_\rho \left( 3\rho \hat{\theta}(\rho) e_d \right). \]
	Consider the function 
	\[ f: \rho \in [0,r) \mapsto -\rho + 3\rho\hat{\theta}(\rho), \]
	which corresponds to the height of the lowest point of the (shifted) ball $\partial B_\rho (3\rho \hat{\theta}(\rho) e_d)$. Simple computation shows that $f$ is a continuous function, and
	\begin{align*}
		f'(\rho) = -1+3\hat{\theta}(\rho) + 3\rho \hat{\theta}'(\rho) & = -1 + 3\hat{\theta}(\rho) + \frac{3}{\log^2 2} \int_{\rho}^{2\rho} \frac{\theta(2s)-\theta(s)}{s} ~ds \\
		& \leq -1 + 3\theta(4\rho) + \frac{3}{\log 2} \theta(4\rho) \\
		& \leq -1 + 13 ~\theta(4r).
	\end{align*} 
	By choosing $r$ sufficiently small so that $\theta(4r)< 1/26$, we can guarantee that $f$ is decreasing. In particular, this implies that the balls $\Psi(\partial B_\rho) = \partial B_{\rho}\left( 3\rho\hat{\theta}(\rho) e_d \right)$ with $\rho \in [0,r)$ are nested, i.e.
	\[ B_{\rho}(3\rho \hat{\theta}(\rho) e_d)  \subset B_{\rho'}(3\rho' \hat{\theta}(\rho') e_d), \quad \text{ if } \rho \leq \rho'. \]
	In fact, let $Y \in B_{\rho}(3\rho \hat{\theta}(\rho) e_d)$ be arbitrary. Then
	\begin{align*}
		|Y- 3\rho'\hat{\theta}(\rho') e_d| & \leq |Y- 3\rho\hat{\theta}(\rho) e_d| + \left( 3\rho'\hat{\theta}(\rho') - 3\rho \hat{\theta}(\rho) \right) \\
		& < \rho + f(\rho') + \rho' - (f(\rho) + \rho)) \\
		& = \rho' + (f(\rho') - f(\rho)) \\
		& \leq \rho'.
	\end{align*}
	Hence $Y\in B_{\rho'}(3\rho' \hat{\theta}(\rho') e_d)$.
	Moreover by the intermediate value theorem $f(\rho)$ assumes all the value between $\lim_{\rho \to r-} f(\rho) = -r+3r\hat{\theta}(r)$ and $\lim_{\rho \to 0+} f(\rho) = 0$. 
	Therefore we have that 
	\[ \Psi(B_r) = B_r(3r\hat{\theta}(r) e_d), \]
	and
	\begin{equation}\label{eq:PsiBr}
		\partial \Psi(B_r) = \partial B_r (3r\hat{\theta}(r) e_d) = \Psi(\partial B_r). 
	\end{equation} 
	
	Therefore 
	\[ \widehat{H}(X,r) = \int_{\Psi_X(\partial B_r) \cap D} u^2 \sH = \int_{\partial B_r(X+ 3r\hat{\theta}(r) e_d) \cap D} u^2 \sH, \]
	\[ \widehat{D}(X,r) = \iint_{\Psi_X(B_r) \cap D} |\nabla u|^2 ~dZ = \iint_{B_r(X+ 3r\hat{\theta}(r) e_d)} |\nabla u|^2 ~dZ, \]
	and the proof is finished.
\end{proof}

Recall in \cite[Proposition 3.10]{KZ}, we have shown that
\begin{equation}\label{def:mNr}
	r \mapsto N_X(r) := N(u\circ\Psi_X, r) \exp\left( C\int_0^r \frac{\theta(s)}{s} ds \right) 
\end{equation} 
is monotone nondecreasing. Since $N_{X_0} = \lim_{r\to 0} N_{X_0}(r)$, for $r$ sufficiently small we have
\begin{equation}\label{tmp:Nrsmall}
	N_{X_0}(r) \leq N_{X_0} + \frac15. 
\end{equation} 
By Lemma \ref{lm:starshaped} and \eqref{def:mNr}, we have
\begin{align}
	N_{X_0}(r) & = N(u\circ \Psi_{X_0}, r) \exp\left( C\int_0^r \frac{\theta(s)}{s} ds \right) \nonumber \\
	& = \left[ 1+O(\theta(4r)) \right] N \left(u, X_0 + 3r\hat{\theta}(r) e_d, r \right) \exp\left( C\int_0^r \frac{\theta(s)}{s} ds \right). \label{tmp:starshaped}
\end{align} 
Let $r$ be sufficiently small, so that
\[ \theta(4r) \lesssim \frac{N_{X_0} + \frac14}{N_{X_0} + \frac15}. \]
Then by \eqref{tmp:Nrsmall} and \eqref{tmp:starshaped} we get
\begin{equation}\label{tmp:usc3}
	N \left(u, X_0 + 3r\hat{\theta}(r) e_d, r \right) \exp\left( C\int_0^r \frac{\theta(s)}{s} ds \right) \leq N_{X_0} + \frac14. 
\end{equation} 
Suppose $\hat{X}_j, \hat{X}_0 \in \overline{D}$ satisfy $\hat{X}_j \to \hat{X}_0$. Then the standard Almgren's frequency function (see \eqref{def:Nr}) satisfies
\[ N(u, \hat{X}_j, r) \to N(u, \hat{X}_0, r) \quad \text{ as } j\to \infty. \]
In fact, clearly the map
\[ X \mapsto \iint_{B_r(X)} |\nabla u|^2 dY \]
is continuous since $u \in W^{1,2}$. By a change of variable, it is also easy to see the map
\[ X \mapsto \int_{B_r(X)} u^2 \sH \]
is differentiable (and strictly positive for non-trivial harmonic function $u$). Therefore
\[ N(u, \hat{X}_j, r) = \frac{r \iint_{B_r(\hat{X}_j)} |\nabla u|^2 dY}{ \int_{B_r(\hat{X}_j)} u^2 \sH} \to \frac{r \iint_{B_r(\hat{X}_0)} |\nabla u|^2 dY}{ \int_{B_r(\hat{X}_0)} u^2 \sH} = N(u, \hat{X}_0, r). \]
In particular, this combined with \eqref{tmp:usc3} and $X_j \to X_0$, gives 
\begin{equation}\label{tmp:usc4}
	N \left(u, X_j + 3r\hat{\theta}(r) e_d, r \right) \exp\left( C\int_0^r \frac{\theta(s)}{s} ds \right) \leq N_{X_0} + \frac13, 
\end{equation}
for $j$ sufficiently large.
Again by Lemma \ref{lm:starshaped} and by taking $r$ sufficiently small, we have
\[ N_{X_j}(r) = \left[ 1+O(\theta(4r)) \right] N \left(u, X_j + 3r\hat{\theta}(r) e_d, r \right) \exp\left( C\int_0^r \frac{\theta(s)}{s} ds \right) \leq N_{X_0} + \frac12.  \]
By the monotonicity of the frequency function $r\mapsto N_{X_j}(r)$, we finally conclude that
\[ N_{X_j} \leq N_{X_j}(r) \leq N_{X_0} + \frac12. \]
Since $N_{X}$ take integer values, this implies $N_{X_j} \leq N_{X_0}$. This finishes the proof of Lemma \ref{lm:usc}.

\end{document}